%

\documentclass[12pt]{amsart}

\theoremstyle{plain}
\newtheorem{theorem}{Theorem}[section]

\newtheorem{corollary}[theorem]{Corollary}

\newtheorem{lemma}[theorem]{Lemma}

\newtheorem{proposition}[theorem]{Proposition}

\newtheorem{problem}[theorem]{Problem}

\theoremstyle{definition}
\newtheorem{example}[theorem]{Example}

\newtheorem{construction}[theorem]{Construction}

\numberwithin{equation}{section}

\usepackage{fullpage}
\usepackage{amssymb}
\usepackage{tikz}

\usepackage{colortbl}

\def\chh{\cellcolor{blue!30}}
\def\cll{\cellcolor{blue!15}}
\def\chl{\cellcolor{red!30}}
\def\clh{\cellcolor{red!15}}

\def\setof#1#2{\{#1:\;#2\}}
\def\d{d}

\begin{document}

\title{Permutations in two dimensions that maximally separate neighbors}

\author{Mohammed Albow}

\author{Jeff Edgington}

\author{Mario Lopez}

\address[Albow, Edgington, Lopez]{Department of Computer Science, University of Denver, 2155 East Wesley Avenue, Denver, Colorado 80208, U.S.A.}

\email[Albow]{Mohammed.Al-Bow@du.edu}

\email[Edgington]{Jeffrey.Edgington@du.edu}

\email[Lopez]{mlopez@du.edu}

\author{Petr Vojt\v{e}chovsk\'y}

\address[Vojt\v{e}chovsk\'y]{Department of Mathematics, University of Denver, 2390 S York Street, Denver, Colorado 80208, U.S.A.}

\email[Vojt\v{e}chovsk\'y]{petr@math.du.edu}

\begin{abstract}
We characterize all permutations on even-by-even grids that maximally separate neighboring vertices. More precisely, let $n_1$, $n_2$ be positive even integers, let $I(n_1,n_2)=\{1,\dots,n_1\}\times\{1,\dots,n_2\}$ be the $n_1\times n_2$ grid, let $\d$ be the $L_1$ metric on $I(n_1,n_2)$, and let $N=\setof{\{x,y\}\in I(n_1,n_2)\times I(n_1,n_2)}{\d(x,y)=1}$ be the set of neighbors in $I(n_1,n_2)$. We characterize all permutations $\pi$ of $I(n_1,n_2)$ that maximize $\sum_{\{x,y\}\in N} \d(\pi(x),\pi(y))$.
\end{abstract}

\keywords{Permutation in two dimensions, neighbors in graphs, separation of neighboring vertices, total displacement of a permutation.}
\subjclass[2010]{05A05}

\thanks{Research partially supported by the Simons Foundation Collaboration Grant 210176 to Petr Vojt\v{e}chovsk\'y}

\maketitle

\section{Introduction}

The \emph{total displacement} $\sum_{i=1}^n |\pi(i)-i|$ of a permutation $\pi$ on $\{1,2,\dots,n\}$ was investigated by Diaconis and Graham \cite{DG}, Gallero \emph{et al.} \cite{GEtal}, Daly and Vojt\v{e}chovsk\'y \cite{DV}, Guay-Paquet and Petersen \cite{GP}, B\"artschi \emph{et al.} \cite{B}, and others. The concept appears in the literature under various names and with subtle variations; the above definition and terminology is due to Knuth \cite[Problem 5.1.1.28]{Knuth}. The displacements $|\pi(i)-i|$ play a role in the construction of interleavers for turbo codes \cite{BEtal}.

\medskip

In this paper we study a quantity similar to total displacement that measures how much permutations \emph{separate} neighboring points. Let $X$ be a finite set, $\d$ a metric on $X$, and $N=\setof{\{x,y\}\in X\times X}{\d(x,y)=1}$ the set of (unit) neighbors in $X$. For a permutation $\pi$ of $X$, the quantity
\begin{displaymath}
    \frac{1}{|N|}\sum_{\{x,y\}\in N} \d(\pi(x),\pi(y))
\end{displaymath}
yields the average distance between $\pi$-images of neighboring points. Without the normalizing factor $1/|N|$, we obtain the function
\begin{equation}\label{Eq:f}
    f:S_{(X,d)}\to\mathbb N,\quad f(\pi) = \sum_{\{x,y\}\in N} \d(\pi(x),\pi(y)).
\end{equation}
We are interested in the properties of $f$, in particular in the following questions: \emph{What is the maximal value of $f$? For which permutations $\pi$ of $X$ is the maximum of $f$ attained?}

\medskip

These and other questions were studied and answered in \cite{DV} for the simplest case of a one-dimensional interval $X=\{1,2,\dots,n\}$ equipped with the $L_1$ metric $\d(i,j)=|i-j|$, cf. \cite[Theorem 3.1]{DV} or Theorem \ref{Th:DV}. Here we answer the two questions for permutations on even-by-even grids $I(2t_1,2t_2)$ equipped with the two-dimensional $L_1$ metric. The solution is much more intricate than in one dimension and can be roughly described as follows:
\begin{itemize}
\item the four entries at the center of the grid must be moved into the four corners,
\item further concentric layers surrounding the center must be moved onto the boundary of the grid,
\item images must maintain a double checkerboard pattern as much as possible, in the sense that if $x=(x_1,x_2)$, $y=(y_1,y_2)$ are the images of neighbors, then the values $x_1$, $y_1$ must not be both small (in $[1,\dots,t_1])$ nor both large, and similarly for $x_2$, $y_2$,
\item the unavoidable defects in the double checkerboard pattern must be located along the line parallel to the shorter dimension that partitions the grid into two equal halves,
\item the values involved in the defects must alternate between $t_1$ and $t_1+1$, or between $t_2$ and $t_2+1$.
\end{itemize}
See Construction \ref{Co:Pi} for a more precise description of the solutions, Figure \ref{Fg:OptimalSigma} for an example on the $6\times 6$ grid, Theorem \ref{Th:Main} for a complete description of the solutions in the generic case, and the comments following Theorem \ref{Th:Main} for the four exceptional cases.

\medskip

The paper concludes with comments on the general two-dimensional case and on higher-dimensional cases, where the solutions are suspected to behave quite differently.

\medskip

A playful example that motivates the two-dimensional problem is as follows. \emph{Students are about to take a test. Assuming that the students occupy all classroom seats arranged in a rectangular grid, how can the students be reseated so that the average distance to their previous neighbors is maximized?} Every experienced instructor will immediately recognize the practical importance of this problem.

We are not aware of an engineering application of our result. Nevertheless, two-dimensional interleavers have been considered for the transmission of two-dimensional data in noisy channels \cite{HEtal}, as were two-dimensional constructions of regular interleavers for turbo codes \cite{BEtal}.

\subsection{Notation}

All sets in this paper are treated as multisets and they will be denoted by uppercase letters. If $A$ is a multiset then $|A|$ denotes its cardinality and $\sum A$ is a shorthand for $\sum_{x\in A} x$. The notation $k*x$ indicates that there are $k$ copies of $x$, for instance, $\{3*2,2*4\} = \{2,2,2,4,4\}$.

We denote the coordinates of $x = (i,j)$ of $\mathbb Z\times \mathbb Z$ by $x_1=i$ and $x_2=j$. The subscript $1$ refers to rows, while the subscript $2$ refers to columns. If $A$ is a submultiset of $\mathbb Z\times\mathbb Z$ and $i\in\{1,2\}$, we define
\begin{displaymath}
    A_i = \setof{x_i}{x\in A}.
\end{displaymath}


Fix positive integers $n_1$, $n_2$ and let $I=I(n_1,n_2) = \{1,\dots,n_1\}\times \{1,\dots,n_2\}$ be the $n_1\times n_2$ grid. We will visualize a permutation $\pi$ of $I$ as an $n_1\times n_2$ array with the coordinate $(1,1)$ in the top left corner (that is, we are using matrix coordinates), where we place $(k,\ell)$ in row $i$ and column $j$ if and only if $\pi(i,j)=(k,\ell)$.

Given a permutation $\pi$ of $I$, let
\begin{displaymath}
    C=\{\pi(1,1),\pi(1,n_2),\pi(n_1,1),\pi(n_1,n_2)\}
\end{displaymath}
be the values of $\pi$ on the four corners of $I$, and let
\begin{displaymath}
    B=\setof{\pi(i,j)}{i\in\{1,n_1\}\text{ or } \,j\in\{1,n_2\}}\setminus C
\end{displaymath}
be the values of $\pi$ on the boundary of $I$ excluding the four corners.

Consider the $L_1$ metric $\d$ on $I$ defined by
\begin{displaymath}
    \d((i,j),(k,\ell)) = |i-k|+|j-\ell|.
\end{displaymath}
Note that the corners have two neighbors, other boundary cells have three neighbors, and interior cells have four neighbors in $I$.

As a special case of the general definition \eqref{Eq:f}, for a permutation $\pi$ of $I$ let
\begin{equation}\label{Eq:L1}
    \begin{array}{lll}
        f(\pi)  &=&\sum_{\substack{1\le i<n_1\\ 1\le j\le n_2}}\left(|\pi(i,j)_1-\pi(i+1,j)_1|+|\pi(i,j)_2-\pi(i+1,j)_2|\right)\\
                &+&\sum_{\substack{1\le i\le n_1\\ 1\le j<n_2}}\left(|\pi(i,j)_1-\pi(i,j+1)_1|+|\pi(i,j)_2-\pi(i,j+1)_2|\right).
    \end{array}
\end{equation}

Let $S_1 = \setof{4n_2*i}{1\le i\le n_1}$ and $S_2=\setof{4n_1*i}{1\le i\le n_2}$. The entries $\pi(i,j)_k$ of the combined sums of \eqref{Eq:L1} form a submultiset of $S_1\cup S_2$. Therefore, there are suitable submultisets $S_i^+ = S_i^+(\pi)$, $S_i^- = S_i^-(\pi)$ of $S_i$ such that
\begin{equation}\label{Eq:L1PN}
    f(\pi) = \sum S_1^+ - \sum S_1^- + \sum S_2^+ - \sum S_2^-.
\end{equation}
Note that $|S_i^+| = |S_i^-| = n_1(n_2-1)+n_2(n_1-1)$.

Here is a small example:

\begin{example}
Consider the permutation $\pi$ of $I(2,3)$ given by
\begin{displaymath}
    \begin{array}{|c|c|c|}
    \hline
    (2,2)&(1,3)&(2,1)\\
    \hline
    (1,2)&(1,1)&(2,3)\\
    \hline
    \end{array}
\end{displaymath}
Then
\begin{align*}
    f(\pi) &= (|2-1|+|2-3|) + (|1-2|+|3-1|) + (|1-1|+|2-1|) + (|1-2|+|1-3|)\\
        &\quad\quad+ (|2-1|+|2-2|) + (|1-1|+|3-1|) + (|2-2|+|1-3|) = 14.
\end{align*}
Upon separating the row and column coordinates, we get
\begin{align*}
    f(\pi) &= (|2-1|+|1-2|+|1-1|+|1-2|+|2-1|+|1-1|+|2-2|)\\
        &\quad\quad+(|2-3|+|3-1|+|2-1|+|1-3|+|2-2|+|3-1|+|1-3|).
\end{align*}
Finally, upon simplifying the absolute values and collecting positive and negative entries, we obtain
\begin{align*}
    f(\pi)  &= (2+2+1+2+2+1+2) - (1+1+1+1+1+1+2)\\
            &\quad\quad + (3+3+2+3+2+3+3) - (2+1+1+1+2+1+1)\\
            &=\sum S_1^+ - \sum S_1^- + \sum S_2^+ - \sum S_2^-,
\end{align*}
where $S_1^+ = \{2*1,5*2\}$, $S_1^- = \{6*1, 1*2\}$, $S_2^+=\{2*2,5*3\}$ and $S_2^-=\{5*1,2*2\}$ are the desired multisets.
\end{example}

\subsection{An unattainable upper bound}

From now on suppose that $n_1=2t_1$ and $n_2=2t_2$. For $A_i\subseteq S_i$, let
\begin{displaymath}
    A_i^< = \setof{u\in A_i}{1\le u\le t_i},\quad
    A_i^> = \setof{u\in A_i}{t_i<u\le n_i}.
\end{displaymath}
We will refer to the elements of $A_i^<$ as the \emph{small} values of $A_i$, and to the elements of $A_i^>$ as the \emph{large} values of $A_i$. We have $A_i^<\cup A_i^>=A_i$, of course.

As a special case we obtain the multisets $S_i^<$, $S_i^>$ with cardinalities $2n_1n_2$. Whenever two subsets $A_i^<\subseteq S_i^<$ and $A_i^>\subseteq S_i^>$ are given, we will assume that $A_i = A_i^<\cup A_i^>$ is defined.

\begin{lemma}\label{Lm:UpperBound}
Let $\pi$ be a permutation of $I(n_1,n_2)$ with $n_i=2t_i$. Then
\begin{displaymath}
    f(\pi) < \sum_{i=1}^2\left(\sum S_i^> - \sum S_i^<\right).
\end{displaymath}
\end{lemma}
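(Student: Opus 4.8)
The plan is to prove the inequality one coordinate at a time and then add the two contributions. Fix $k\in\{1,2\}$ and write $m=n_1(n_2-1)+n_2(n_1-1)$ for the common cardinality $|S_k^+|=|S_k^-|$. Since $S_k^+$ and $S_k^-$ are each submultisets of $S_k$ of size $m$, the sum $\sum S_k^+$ cannot exceed the sum $T_k^>$ of the $m$ largest elements of $S_k$, and $\sum S_k^-$ cannot be smaller than the sum $T_k^<$ of the $m$ smallest elements of $S_k$. This already gives
\begin{displaymath}
    \sum S_k^+ - \sum S_k^- \le T_k^> - T_k^<,
\end{displaymath}
and the whole problem reduces to comparing $T_k^> - T_k^<$ with $\sum S_k^> - \sum S_k^<$.

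The key numerical observation I would exploit is that $m$ is strictly smaller than the common size $2n_1n_2$ of $S_k^>$ and $S_k^<$; in fact $2n_1n_2 - m = n_1+n_2$. Because $S_k^>$ consists precisely of all large values (those exceeding $t_k$) and has $2n_1n_2 > m$ elements, the $m$ largest elements of $S_k$ all lie in $S_k^>$, so they are obtained from $S_k^>$ by deleting its $n_1+n_2$ smallest members. Hence $T_k^> = \sum S_k^> - X$, where $X$ is the sum of those $n_1+n_2$ deleted values, each of which is at least $t_k+1$. Symmetrically, since $S_k^<$ consists of all small values and has size $2n_1n_2 > m$, we get $T_k^< = \sum S_k^< - Y$, where $Y$ is the sum of the $n_1+n_2$ largest elements of $S_k^<$, each at most $t_k$.

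Subtracting these yields $T_k^> - T_k^< = \left(\sum S_k^> - \sum S_k^<\right) - (X - Y)$. Every deleted large value exceeds every deleted small value, and each of $X$ and $Y$ is a sum of $n_1+n_2$ terms, so $X - Y \ge (n_1+n_2)\big((t_k+1)-t_k\big) = n_1+n_2 > 0$. Combining with the first display,
\begin{displaymath}
    \sum S_k^+ - \sum S_k^- \le \left(\sum S_k^> - \sum S_k^<\right) - (n_1+n_2).
\end{displaymath}
Summing over $k=1,2$ then gives $f(\pi) \le \sum_{k=1}^2\left(\sum S_k^> - \sum S_k^<\right) - 2(n_1+n_2)$, which is strictly less than $\sum_{k=1}^2\left(\sum S_k^> - \sum S_k^<\right)$ since $n_1+n_2>0$.

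I expect the only delicate point to be the bookkeeping in the middle step: one must verify that the $m$ extreme elements of $S_k$ lie entirely within $S_k^>$ (respectively $S_k^<$), which is exactly what the strict inequality $m < 2n_1n_2$ guarantees, and that the deleted large values genuinely dominate the deleted small values term by term. The conceptual heart of the argument is this size deficit $n_1+n_2$ between the edge count $m$ and the half-grid count $2n_1n_2$: it forces every configuration to omit positively valued slots from the idealized right-hand side, and that omission is precisely what makes the upper bound unattainable.
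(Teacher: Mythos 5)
Your proof is correct and follows essentially the same route as the paper's: the paper also bounds $\sum S_i^+ - \sum S_i^-$ by the sum of the $|S_i^+|$ largest values of $S_i$ (which lie in $S_i^>$) minus the sum of the $|S_i^-|$ smallest values (which lie in $S_i^<$), and then invokes the cardinality deficit $|S_i^\pm| < 2n_1n_2$ to get strictness. The only difference is cosmetic: you make the deficit explicit and quantify the gap as at least $2(n_1+n_2)$, which the paper leaves as an unquantified strict inequality.
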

\begin{proof}
The equality \eqref{Eq:L1PN} holds for suitable submultisets $S_i^+$, $S_i^-$ of $S_i$. We have $|S_i^+| = |S_i^-| = n_1(n_2-1)+n_2(n_1-1) < 2n_1n_2 = |S_i^>| = |S_i^<|$. Let $X_i^+$ be the multiset consisting of the $|S_i^+|$ largest values in $S_i^>$, and $X_i^-$ the multiset consisting of the $|S_i^-|$ smallest values in $S_i^<$. Then, for $i\in\{1,2\}$, we have
\begin{displaymath}
    \sum S_i^+ - \sum S_i^- \le \sum X_i^+ - \sum X_i^- < \sum S_i^> - \sum S_i^<.
\end{displaymath}
Summing up the inequalities for $i\in\{1,2\}$ finishes the proof.
\end{proof}

The upper bound of Lemma \ref{Lm:UpperBound} cannot be achieved for two reasons. First, due to the boundary of the grid, not every symbol from $S_i$ is used in $f(\pi)$ with the maximal multiplicity. Second, to count all large values with positive signs and all small values with negative signs in \eqref{Eq:L1PN}, the permutation $\pi$ would have to form a checkerboard with respect to the sets $S_1^<$, $S_1^>$ in the first coordinate, and another checkerboard with respect to the sets $S_2^<$, $S_2^>$ in the second coordinate. This is not possible. Indeed, once we place an entry from $S_1^>\times S_2^>$ in the array defining $\pi$, we must continue the double checkerboard pattern by alternating entries from $S_1^>\times S_2^>$ with entries from $S_1^<\times S_2^<$. Since an entry from $S_1^<\times S_2^>$, say, has to be used eventually, it will be adjacent to either an entry from $S_1^>\times S_2^>$ or to an entry from $S_1^<\times S_2^<$, and it will break the pattern in one of the coordinates.

\section{Corners, boundaries and defects in the double checkerboard pattern}

\subsection{Defects}

To investigate the flaws in the double checkerboard pattern, we define the \emph{defect} multisets
\begin{align*}
    D_i^< &= \setof{\max\{\pi(x)_i,\pi(y)_i\}}{\{x,y\}\in N,\,\{\pi(x)_i,\pi(y)_i\}\subseteq S_i^<},\\
    D_i^> &= \setof{\min\{\pi(x)_i,\pi(y)_i\}}{\{x,y\}\in N,\,\{\pi(x)_i,\pi(y)_i\}\subseteq S_i^>},
\end{align*}
for $i\in\{1,2\}$.

The definition of $D_1^<$ is justified as follows. If $x$, $y$ are neighboring cells and if the two values $u=\pi(x)_1$, $v=\pi(y)_1$ are both small, say with $u<v$, then these two values will contribute to $f(\pi)$ by $-u+v$, while in the ideal (and globally unattainable) situation we would like them to contribute by $-u-v$. We therefore keep track of the value with the incorrect sign, namely $v=\max\{u,v\}$. Similarly for the sets $D_1^>$, $D_2^<$, $D_2^>$.

\medskip

From now on, we will color cells with values in $S_1^>\times S_2^>$ dark blue, $S_1^<\times S_2^<$ light blue, $S_1^>\times S_2^<$ dark red, and $S_1^<\times S_2^>$ light red. We will call the (blue) values in $(S_1^>\times S_2^>)\cup(S_1^<\times S_2^<)$ \emph{homogeneous}, and the (red) values in $(S_1^>\times S_2^<)\cup (S_1^<\times S_2^>)$ \emph{heterogeneous}.

Let us give another example to elucidate many of the concepts defined so far.

\begin{example}\label{Ex:Defect}
Consider the permutation $\pi$ of $I(4,6)$ defined by
\begin{displaymath}
    \begin{array}{|c|c|c|c|c|c|}
        \hline
        \clh(2,6)&\chl(3,1)&\cll(1,1)&\chh(3,4)&\clh(1,4)&\chh(3,6)\\
        \hline
        \chl(4,3)&\clh(2,5)&\chl(3,2)&\cll(1,2)&\chh(3,5)&\cll(2,2)\\
        \hline
        \clh(2,4)&\chl(4,2)&\cll(1,3)&\chh(4,6)&\cll(2,1)&\chh(4,4)\\
        \hline
        \chl(4,1)&\clh(1,6)&\chl(3,3)&\clh(1,5)&\chh(4,5)&\cll(2,3)\\
        \hline
    \end{array}
\end{displaymath}
Since the values inside the homogeneous (resp. heterogeneous) region happen to form a double checkerboard in this example, the defects can occur only where the two regions meet. For instance, the adjacent entries $(1,4)$, $(3,6)$ in the first row contribute a column defect $\min\{4,6\}=4$, while the adjacent entries $(1,1)$, $(3,2)$ in the third column contribute a column defect $\max\{1,2\}=2$. A quick inspection shows that we have $D_1=\emptyset$, $D_2^<=\{1,2,2,3,3,3\}$, $D_2^>=\{4,4,4,5,5\}$, $D_2=\{1,2*2,3*3,3*4,2*5\}$, $C_1^< = \{2,2\}$, $C_1^>=\{3,4\}$, and so on.
\end{example}

The following, rather obvious lemma will be used to minimize the number of defects on the grid.

\begin{lemma}\label{Lm:Grid}
Let $n_1\le n_2$ be positive even integers. Consider $I(n_1,n_2)$ as a graph where two vertices $x$, $y$ are joined by an edge if an only if $\{x,y\}\in N$. Color a half of the vertices red and the other half blue. Then there are at least $n_1$ red-blue edges.

Suppose further that there are precisely $n_1$ red-blue edges. If $n_1<n_2$ then the grid can be evenly split by a vertical cut into blue and red connected components. If $n_1=n_2$ then the grid can be evenly split by a vertical cut or a horizontal cut into blue and red connected components.
\end{lemma}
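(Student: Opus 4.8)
The plan is to bound the number of red-blue edges by counting them separately as \emph{horizontal} edges (joining $(i,j)$ and $(i,j+1)$) and \emph{vertical} edges (joining $(i,j)$ and $(i+1,j)$), resting everything on the trivial fact that a monochromatic path carries no boundary edge while a two-colored path carries at least one. Call a row or column \emph{mixed} if it contains both colors. If every row is mixed, then each of the $n_1$ rows carries at least one horizontal red-blue edge, giving at least $n_1$ such edges. Dually, if every column is mixed, each of the $n_2$ columns carries a vertical red-blue edge, giving at least $n_2\ge n_1$. So the lower bound is immediate unless both some row and some column are monochromatic.

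In that remaining ``cross'' case, a monochromatic row and a monochromatic column meet in a single cell, so they share a color, say red; thus the red set contains a full red row $i_0$ and a full red column $j_0$. Every blue cell then lies in a row meeting $j_0$ (hence a mixed row) and in a column meeting $i_0$ (hence a mixed column). Writing $m$, $m'$ for the numbers of mixed columns and mixed rows, the blue cells are confined to the $m'\times m$ subgrid they span, so $mm'\ge n_1n_2/2$. Counting one horizontal boundary edge per mixed row and one vertical boundary edge per mixed column --- these being disjoint edge sets --- the total is at least $m+m'\ge 2\sqrt{mm'}\ge\sqrt{2n_1n_2}>n_1$, using $n_2\ge n_1$. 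This finishes the lower bound and shows the cross case is never tight.

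For the equality statement I would assume exactly $n_1$ red-blue edges. The cross case is excluded, so either every row is mixed or every column is mixed. If every row is mixed, equality forces exactly one horizontal transition per row and zero vertical transitions; the latter makes each column monochromatic, so a cell's color depends only on its column, and the single transition per row forces the columns into a left block of one color and a right block of the other. Balance pins the split at column $n_2/2$, i.e.\ a vertical cut. If instead some column is monochromatic, then not every row is mixed, the vertical count already gives at least $n_2$ edges, which forces $n_2=n_1$ together with zero horizontal transitions, and the symmetric argument yields a horizontal cut at row $n_1/2$. Hence for $n_1<n_2$ only the vertical cut survives, while for $n_1=n_2$ both directions occur.

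The step needing genuine care is the cross case: the main obstacle is to show that a monochromatic row together with a monochromatic column cannot coexist with a boundary as small as $n_1$. Confining the blue cells to an $m'\times m$ subgrid, then combining $mm'\ge n_1n_2/2$ with AM--GM and the hypothesis $n_1\le n_2$, is exactly what makes the inequality strict there; the small point to verify is that the per-mixed-row and per-mixed-column counts refer to disjoint edge sets, so that they may legitimately be added.
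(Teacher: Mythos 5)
Your lower-bound argument is correct and is essentially the paper's own proof. The split into horizontal and vertical transitions, the reduction to the ``cross'' case, confining the blue cells to the subgrid spanned by mixed rows and mixed columns, and the estimate $m+m'\ge 2\sqrt{mm'}\ge\sqrt{2n_1n_2}>n_1$ is the same computation the paper performs in the form $n_1^2\le n_1n_2\le 2rc<(r+c)^2\le e^2$, with your $m'$, $m$ playing the role of the paper's $r$, $c$. The disjointness of horizontal and vertical red-blue edges, which you single out as the point needing care, is indeed what makes the additive count legitimate, and your treatment of it is fine.

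The equality case, however, contains a real error, not just a stylistic one: your second branch starts from ``some \emph{column} is monochromatic'' and is incoherent as written. If some column is monochromatic then, by your own cross-case exclusion (valid here since $e=n_1$), no \emph{row} can be monochromatic --- so every row is mixed and you are back in your first branch, which yields the \emph{vertical} cut; this hypothesis cannot produce the horizontal cut. Worse, from that premise you deduce ``the vertical count already gives at least $n_2$ edges,'' but obtaining $n_2$ vertical edges requires every column to be mixed, which directly contradicts the premise. The case you actually need --- and the one the paper analyzes --- is the complement of your first branch: some \emph{row} is monochromatic. Then cross-exclusion forces every \emph{column} to be mixed, the vertical count gives $e\ge n_2\ge n_1=e$, hence $n_2=n_1$, exactly one vertical transition per column and zero horizontal transitions (so every row is monochromatic), and the transposed block argument produces the horizontal cut at row $n_1/2$. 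With ``row'' and ``column'' exchanged in this branch, your proof is correct and coincides with the paper's; as literally stated, the case analysis misses precisely the configuration that realizes the horizontal cut.
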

\begin{proof}
Let us first show that the number $e$ of red-blue edges satisfies $e\ge n_1$. If no row is monochromatic then $e\ge n_1$. If no column is monochromatic then $e\ge n_2\ge n_1$. We can therefore assume that there exists a monochromatic row and a monochromatic column, without loss of generality forming a red cross. Let $r$ (resp. $c$) be the number of rows (resp. columns) containing a blue vertex. Any row or column containing a blue vertex intersects the red cross, hence $r+c\le e$. We also have $rc\ge n_1n_2/2$, the number of blue vertices. Therefore, $n_1^2\le n_1n_2\le 2rc<(r+c)^2\le e^2$, and $e>n_1$ follows.

Suppose that $e=n_1$. If no row is monochromatic then every column is monochromatic (else $e>n_1$) and we can see easily that the grid can be evenly split by a vertical cut into two monochromatic connected components. If there is a monochromatic row, then no column is monochromatic (else $e>n_1$ as in the first paragraph), which forces $n_1=n_2$, and we conclude the proof using a transposed argument.
\end{proof}

\begin{corollary}\label{Cr:Grid}
Every permutation of $I(2t_1,2t_2)$ contains at least $2t_1$ defects, i.e., $|D_1\cup D_2|\ge 2t_1$.
\end{corollary}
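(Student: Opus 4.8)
The plan is to deduce Corollary~\ref{Cr:Grid} directly from Lemma~\ref{Lm:Grid} by exhibiting, for an arbitrary permutation $\pi$ of $I(2t_1,2t_2)$, a suitable red/blue coloring of the grid to which the lemma applies. The natural coloring is the one already introduced: color a cell according to whether its first-coordinate value $\pi(x)_1$ is small (in $[1,t_1]$) or large (in $(t_1,n_1]$). Since $\pi$ is a bijection onto $I$, exactly half of the $n_1n_2=4t_1t_2$ cells receive a small first coordinate and half receive a large first coordinate, so this is a balanced coloring and Lemma~\ref{Lm:Grid} applies to give at least $n_1=2t_1$ bichromatic edges.

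The key observation is that a bichromatic edge in this coloring corresponds exactly to a defect counted by $D_1$. Indeed, if $x$, $y$ are neighbors with $\pi(x)_1$ small and $\pi(y)_1$ large, then the pair $\{\pi(x)_1,\pi(y)_1\}$ is \emph{not} contained in $S_1^<$ and not contained in $S_1^>$, so this edge contributes neither to $D_1^<$ nor to $D_1^>$; conversely a monochromatic edge in the first-coordinate coloring contributes a single element to either $D_1^<$ or $D_1^>$. Wait---this is the opposite matching from what we want, so I would instead run the same argument with the \emph{first-coordinate} coloring producing defects in $D_1$ via monochromatic edges, and recognize that Lemma~\ref{Lm:Grid} bounds bichromatic edges from below, hence bounds monochromatic edges from above, which is the wrong direction. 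The correct route is therefore to apply Lemma~\ref{Lm:Grid} so as to lower-bound the quantity we actually need.

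To fix the direction, I would color by homogeneous versus heterogeneous: recall that homogeneous (blue) cells carry values in $(S_1^>\times S_2^>)\cup(S_1^<\times S_2^<)$ and heterogeneous (red) cells carry values in $(S_1^>\times S_2^<)\cup(S_1^<\times S_2^>)$. There are exactly $2t_1t_2$ cells of each type since $\pi$ is a bijection, so this too is a balanced coloring, and Lemma~\ref{Lm:Grid} yields at least $2t_1$ bichromatic edges. The crucial point is that a blue--red edge must differ in exactly one of the two coordinate-smallness labels: if one endpoint is homogeneous and the other heterogeneous, then either their first coordinates agree in smallness while their second coordinates disagree, or vice versa. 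In the former case the edge produces a defect in $D_2$, in the latter a defect in $D_1$; either way each bichromatic edge contributes at least one element to $D_1\cup D_2$. Summing over the at least $2t_1$ bichromatic edges gives $|D_1\cup D_2|\ge 2t_1$, as required.

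The main obstacle I anticipate is getting the correspondence between edges and defects exactly right, since there are two colorings in play (smallness in each coordinate) and one must argue that every bichromatic homogeneous/heterogeneous edge injects into the defect multiset without double-counting issues or sign confusion. In particular I would verify carefully that a homogeneous/heterogeneous disagreement forces a monochromatic agreement in precisely one coordinate---so that the edge genuinely lands in $D_i$ for exactly one $i$---and that distinct edges yield distinct defect contributions, so that the count of bichromatic edges is a genuine lower bound for $|D_1\cup D_2|$ rather than an overcount. Once this bookkeeping is pinned down, the corollary follows immediately from the edge count in Lemma~\ref{Lm:Grid}.
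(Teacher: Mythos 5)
Your final argument is precisely the paper's proof: color cells blue if homogeneous and red if heterogeneous, note the coloring is balanced, apply Lemma \ref{Lm:Grid} to get at least $n_1=2t_1$ red--blue edges, and observe that each such edge agrees in the smallness of exactly one coordinate and hence contributes exactly one defect to $D_1\cup D_2$. One harmless index slip: agreement in the \emph{first} coordinate yields a defect in $D_1$ (not $D_2$) and vice versa --- e.g.\ an edge between $S_1^<\times S_2^<$ and $S_1^<\times S_2^>$ gives a row defect --- but since only the union $D_1\cup D_2$ is counted, the conclusion is unaffected.
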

\begin{proof}
Color $(i,j)$ blue if $\pi(i,j)$ is homogeneous, and red otherwise. By Lemma \ref{Lm:Grid} there are at least $2t_1$ red-blue edges, each contributing a defect. For instance, an edge between $S_1^<\times S_2^<$ and $S_1^>\times S_2^<$ contributes a column defect.
\end{proof}

\subsection{An exact formula}

The value of $f(\pi)$ can be expressed exactly in terms of the corners, boundaries and defects. To that effect, let
\begin{equation}\label{Eq:Opt}
    g(\pi) = \sum_{i=1}^2\left(\left(\sum B_i^>+2\sum C_i^> + 2\sum D_i^>\right) -\left(\sum B_i^< + 2\sum C_i^< + 2\sum D_i^<\right)\right).
\end{equation}

\begin{proposition}\label{Pr:ExactWithH}
Let $\pi$ be a permutation of $I(2t_1,2t_2)$. Then
\begin{displaymath}
    f(\pi) = \sum_{i=1}^2\left(\sum S_i^> - \sum S_i^<\right) - g(\pi).
\end{displaymath}
\end{proposition}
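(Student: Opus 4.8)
The plan is to prove the identity one coordinate at a time. Because $\d$ is the $L_1$ metric, $f(\pi)$ splits as in \eqref{Eq:L1} into a first--coordinate sum and a second--coordinate sum, and combined with \eqref{Eq:L1PN} it suffices, by the symmetry between the two coordinates, to establish the single identity
\begin{displaymath}
    \sum S_1^+ - \sum S_1^- = \left(\sum S_1^> - \sum S_1^<\right) - \left[(\sum B_1^> - \sum B_1^<) + 2(\sum C_1^> - \sum C_1^<) + 2(\sum D_1^> - \sum D_1^<)\right],
\end{displaymath}
whose bracketed quantity is exactly the $i=1$ summand of $g(\pi)$ in \eqref{Eq:Opt}. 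The analogous identity for $i=2$ and a final summation then yield the proposition.

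First I would assign to each cell $(a,b)$ an \emph{ideal sign} $\epsilon(a,b)\in\{+1,-1\}$, equal to $+1$ when $\pi(a,b)_1\in S_1^>$ is large and $-1$ when $\pi(a,b)_1\in S_1^<$ is small, and then compare, edge by edge, the true contribution to $\sum S_1^+-\sum S_1^-$ with the contribution the edge would make if each of its two first coordinates carried its ideal sign. An edge with one small and one large first coordinate already assigns $+$ to the large (which is the maximum) and $-$ to the small (the minimum), so it agrees with the ideal assignment. An edge with two large values records its minimum with a $-$ sign although its ideal sign is $+$, so its true contribution falls short of the ideal one by $2\min$, and this minimum is precisely the element placed in $D_1^>$. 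Dually, an edge with two small values records its maximum with a $+$ sign although its ideal sign is $-$, exceeding the ideal contribution by $2\max$, the element placed in $D_1^<$. Summing over all edges gives
\begin{displaymath}
    \sum S_1^+ - \sum S_1^- = (\text{ideal sum}) - 2\sum D_1^> + 2\sum D_1^<,
\end{displaymath}
where the ideal sum tallies, over every edge, the ideally signed first coordinate of each of its two endpoints.

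Next I would evaluate the ideal sum by regrouping its terms cell by cell: a cell $(a,b)$ lies on $\deg(a,b)$ edges and contributes $\epsilon(a,b)\,\pi(a,b)_1$ to each, so the ideal sum equals $\sum_{(a,b)}\deg(a,b)\,\epsilon(a,b)\,\pi(a,b)_1$. Were every degree equal to $4$, this would be $4(\sum_{\text{large cells}}\pi_1 - \sum_{\text{small cells}}\pi_1)$; since $\pi$ is a bijection onto $I$, each value $i$ occurs exactly $n_2$ times as a first coordinate, and as every large (resp. small) value has multiplicity $4n_2$ in $S_1^>$ (resp. $S_1^<$), this degree--$4$ total is exactly $\sum S_1^> - \sum S_1^<$. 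The true degrees drop below $4$ only on the boundary, so I would subtract the deficits $4-\deg(a,b)$, which equal $2$ at the four corners and $1$ at the remaining boundary cells. Since $C$ and $B$ collect the corner and non-corner boundary values, the deficit correction is $2(\sum C_1^> - \sum C_1^<) + (\sum B_1^> - \sum B_1^<)$, giving
\begin{displaymath}
    \text{ideal sum} = \left(\sum S_1^> - \sum S_1^<\right) - 2(\sum C_1^> - \sum C_1^<) - (\sum B_1^> - \sum B_1^<).
\end{displaymath}
Substituting this into the preceding display produces the desired $i=1$ identity.

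The main obstacle I anticipate is the sign bookkeeping in the middle step: one must check that the only edges deviating from the ideal assignment are the monochromatically large and monochromatically small ones, and that each deviation is exactly twice the recorded defect value, since flipping the sign of a single summand alters a contribution by twice its magnitude. Matching the $+/-$ conventions of $D_1^>$ (the wrongly negative minimum of two large values) and $D_1^<$ (the wrongly positive maximum of two small values) against the $\max/\min$ split inside each absolute value in \eqref{Eq:L1} is where an error is most likely to slip in; the remaining ingredients, namely the multiplicity count $4n_2$ and the corner/boundary degree deficits, are routine.
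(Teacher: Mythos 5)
Your proof is correct and follows essentially the same bookkeeping argument as the paper: both start from the ideal total $\sum_i\left(\sum S_i^> - \sum S_i^<\right)$, in which every value is counted four times with its ideal sign, and then correct by one count per boundary cell, two per corner, and twice each defect value for the sign flips. The only difference is organizational: you split the accounting into an edge-by-edge step (defects) and a cell-by-cell degree-deficit step (boundary and corners), whereas the paper handles all three corrections in a single cell-centric pass.
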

\begin{proof}
Consider a cell $x$ with $\pi(x)=(i,j)$ and $j\in S_2^>$. The value $j$ is counted $8t_1$ times in $\sum S_2^>$. We must make the following modifications to $f(\pi)$ on account of $x$. If $x\in B$ then $j$ must be subtracted once from $\sum S_2^>$ because $x$ has only three neighbors. If $x\in C$ then $j$ must be subtracted twice from $\sum S_2^>$ because $x$ has only two neighbors. If $x$ participates in a defect in the second coordinate (as illustrated by the neighboring values $(1,4)$ and $(3,6)$ in Example \ref{Ex:Defect}), there is some neighbor $y$ of $x$ such that $\pi(y)=(i',j')$ with $j'\in S_2^>$. This defect contributes $k = \min\{j,j'\}$ to $D_2^>$, and we must subtract $k$ two times from $\sum S_2^>$ because of the sign change in the defect.

Similarly for $j\in S_2^<$, except that the value has to be added to $-\sum S_2^<$. Analogously for the first coordinate.
\end{proof}

Maximizing $f(\pi)$ is thus equivalent to minimizing the positive quantity $g(\pi)$. The following result shows that the number of positive summands in \eqref{Eq:Opt} is equal to the number of negative summands in \eqref{Eq:Opt}.

\begin{proposition}\label{Pr:Balance}
Let $\pi$ be a permutation of $I(2t_1,2t_2)$. Then
\begin{displaymath}
    |B_i^>| + 2|C_i^>| + 2|D_i^>| = |B_i^<| + 2|C_i^<| + 2|D_i^<|
\end{displaymath}
for $i\in\{1,2\}$.
\end{proposition}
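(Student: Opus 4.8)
The plan is to prove the identity separately for each fixed coordinate $i\in\{1,2\}$ by a double-counting (handshake) argument on the grid graph. Color a cell $x$ \emph{small} if $\pi(x)_i\le t_i$ and \emph{large} if $\pi(x)_i>t_i$. Since $\pi$ is a permutation, the multiset $\{\pi(x)_i:x\in I\}$ equals $\{x_i:x\in I\}$, in which each admissible value occurs $n_{3-i}$ times; hence exactly $t_i n_{3-i}=n_1n_2/2$ cells are small and the same number are large. This equal split, which is where the hypothesis $n_i=2t_i$ enters, will be the one substantive input.

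First I would reinterpret all six quantities in the statement as counts of cells or edges. Because the paper treats sets as multisets, each neighbor pair $\{x,y\}$ with both $i$-th coordinates small contributes exactly one element to $D_i^<$, so $|D_i^<|$ is the number of small--small edges; likewise $|D_i^>|$ is the number of large--large edges. Since $C_i$ (resp.\ $B_i$) is precisely the multiset of $i$-th coordinates of the four corner values (resp.\ the boundary-minus-corner values), $|C_i^>|$ is the number of corner cells that are large and $|B_i^>|$ is the number of non-corner boundary cells that are large, with identical statements for the small analogues.

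Next I would count edge-incidences by summing vertex degrees. Writing $\deg(x)$ for the number of neighbors of $x$, each small--small edge is counted twice and each mixed edge once when we sum over small cells, so $\sum_{x\text{ small}}\deg(x)=2|D_i^<|+m$, where $m$ is the number of mixed edges; analogously $\sum_{x\text{ large}}\deg(x)=2|D_i^>|+m$. Subtracting eliminates $m$ and gives $\sum_{x\text{ small}}\deg(x)-\sum_{x\text{ large}}\deg(x)=2|D_i^<|-2|D_i^>|$. To evaluate the left side I would write $\deg(x)=4-\delta(x)$, where the deficiency $\delta(x)$ equals $2$ for a corner, $1$ for a non-corner boundary cell, and $0$ for an interior cell (these are exactly the degrees $2,3,4$ recorded after the definition of $\d$). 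The constant $4$ contributes $4\cdot(n_1n_2/2)$ to each of the two sums and so cancels in the difference---this cancellation is precisely where the equal-halves count is needed. What remains is $\sum_{x\text{ large}}\delta(x)-\sum_{x\text{ small}}\delta(x)=(2|C_i^>|+|B_i^>|)-(2|C_i^<|+|B_i^<|)$, using the corner/boundary identifications above. Equating the two expressions for the degree difference yields $2|D_i^<|-2|D_i^>|=2|C_i^>|+|B_i^>|-2|C_i^<|-|B_i^<|$, which rearranges to the asserted balance.

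The argument is essentially bookkeeping, so I do not expect a deep obstacle; the only point demanding care is the bookkeeping itself---correctly matching $|C_i^>|,|B_i^>|,|D_i^>|$ to corner cells, boundary cells, and large--large edges, respectively, and keeping the signs straight through the subtraction. The single genuinely used fact is the equal partition of the cells into small and large values, which forces the leading $4\cdot(n_1n_2/2)$ terms to cancel.
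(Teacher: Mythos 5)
Your proof is correct, and it takes a genuinely different route from the paper's. You argue by direct double counting: interpreting $|D_i^<|$, $|D_i^>|$ as the numbers of small--small and large--large edges, $|C_i^\lessgtr|$, $|B_i^\lessgtr|$ as counts of corner and non-corner boundary cells of each color, and then comparing $\sum \deg(x)$ over small cells with the same sum over large cells; the mixed-edge count $m$ cancels in the subtraction, and the equal split of cells into $n_1n_2/2$ small and $n_1n_2/2$ large (the only place evenness enters) kills the leading $4\cdot(n_1n_2/2)$ terms, leaving exactly the asserted balance. The paper instead proves the identity by an invariance argument: it first establishes the claim for a defect-free permutation on the torus (where $B_i=C_i=\emptyset$), shows that the quantity $|D_i^<|-|D_i^>|$ is unchanged under arbitrary transpositions of cells because every torus cell has exactly four neighbors, and then tracks how $|B_i^\lessgtr|$, $|C_i^\lessgtr|$, $|D_i^\lessgtr|$ trade off against one another when the torus is cut first into a cylinder and then into the grid. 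Your argument is shorter, needs no case analysis, and makes the role of the hypothesis $n_i=2t_i$ completely transparent; the paper's argument is longer but its bookkeeping (how defects convert into boundary and corner contributions under cutting) is precisely the mechanism reused conceptually in Section 4, where the toroidal version of the problem is analyzed, so the two proofs each have something to recommend them. One point worth making explicit if you write yours up: the identification $|D_i^<| = $ (number of small--small edges) relies on $D_i^<$ being a \emph{multiset} with one entry per qualifying edge, which is indeed how the paper defines it.
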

\begin{proof}
Throughout the proof, if we deal with two permutations $\pi$ and $\pi'$, then $B_i^>$ and similar multisets will refer to $\pi$, while ${B'}_i^>$ and similar multisets will refer to $\pi'$.

We will prove the result for $i=1$. The case $i=2$ is similar. Place the grid on a torus by identifying the opposite sides, and interpret the result as having no corners and boundaries, i.e., $C_1=B_1=\emptyset$. Fix any initial permutation on the torus for which small and large values alternate in the first coordinate, that is, $D_1=\emptyset$. Such a permutation exists since both dimensions are even.

Any other permutation on the torus is obtained from the initial permutation by a sequence of successive swaps of two cells. Suppose that the result holds for $\pi$ and let $\pi'$ be the permutation obtained from $\pi$ by swapping yet another pair of cells $x$, $y$. If $\pi(x)_1$, $\pi(y)_1$ are both small or both large, then $|{D'}_1^<|=|D_1^<|$ and $|{D'}_1^>|=|D_1^>|$. Otherwise let us assume without loss of generality that $(\pi(x)_1,\pi(y)_1)\in S_1^<\times S_1^>$. Let $z_x^<$ (resp.\;$z_x^>$) denote the number of neighbors of $x$ in the grid of $\pi$ that have a small entry (resp.\;a large entry) in the first coordinate, and define $z_y^<$, $z_y^>$ similarly. Then $|{D'}_1^<| = |D_1^<| - z_x^< + z_y^<$ and $|{D'}_1^>| = |D_1^>| - z_y^> + z_x^>$. Thus $|{D'}_1^<| - |{D'}_1^>| = |D_1^<| - |D_1^>| - z_x^< - z_x^> + z_y^< + z_y^> = |D_1^<| - |D_1^>|$ because $z_x^< + z_x^> = z_y^< + z_y^> = 4$ on the torus. The result therefore holds for any permutation on the torus.

Any permutation of the grid is obtained from some permutation on the torus by cutting between certain two rows and between certain two columns. Let $\pi$ be a permutation on the torus, and let $\pi'$ be a permutation on a cylinder obtained from $\pi$ by cutting between two rows. Let $x$, $y$ be two cells that are neighbors in $\pi$ but not in $\pi'$. Let us denote by $b_1^<(x,y)$ the contribution of the cells $x$, $y$ to $|B_1^<|$, and similarly for other quantities. We have ${c'}_1^<(x,y) = c_1^<(x,y) = 0 = {c'}_1^>(x,y) = c_1^>(x,y)$ because there are still no corners in $\pi'$. Suppose that $(\pi(x)_1,\pi(y)_1)\in S_1^<\times S_1^<$. Then ${d'}_1^>(x,y)=d_1^>(x,y)$, ${b'}_1^>(x,y) = b_1^>(x,y)$, while ${d'}_1^<(x,y) = d_1^<(x,y)-1$ (because the small row defect between $x$ and $y$ has been removed), and ${b'}_1^<(x,y) = b_1^<(x,y)+2$ (because $x$, $y$ are on the boundary of $\pi'$). Similarly if $(\pi(x)_1,\pi(y)_1)\in S_1^>\times S_1^>$. Suppose without loss of generality that $(\pi(x)_1,\pi(y)_1)\in S_1^<\times S_1^>$. Then ${d'}_1^<(x,y) = d_1^<(x,y)$, ${d'}_1^>(x,y) = d_1^>(x,y)$, ${b'}_1^<(x,y) = b_1^<(x,y)+1$ and ${b'}_1^>(x,y) = {b'}_1^>(x,y)+1$. Similarly for all other pairs of cells across the cut, so the result holds for $\pi'$.

Finally, let $\pi$ be a permutation on the cylinder, and let $\pi'$ be obtained from $\pi$ by cutting between two columns. Let $x$, $y$ be two cells that are neighbors in $\pi$ but not in $\pi'$. If $x$, $y$ are not among the corners of $\pi'$ then we can argue as above. Thus suppose that $x$, $y$ are among the corners of $\pi'$, necessarily in the same row. Suppose that $(\pi(x)_1,\pi(y)_1)\in S_1^<\times S_1^<$. Then ${c'}_1^<(x,y) = c_1^<(x,y) + 2$, ${b'}_1^<(x,y) = b_1^<(x,y)-2$ (because both $x$, $y$ were on the boundary of $\pi$ but are among the corners of $\pi'$), ${d'}_1^<(x,y) = d_1^<(x,y)-1$, and the large values are not affected. Similarly if $(\pi(x)_1,\pi(y)_1)\in S_1^>\times S_1^>$. Suppose without loss of generality that $(\pi(x)_1,\pi(y)_1)\in S_1^<\times S_1^>$. Then ${c'}_1^<(x,y) = c_1^<(x,y)+1$, ${c'}_1^>(x,y) = c_1^>(x,y)+1$, ${b'}_1^<(x,y) = b_1^<(x,y)-1$, ${b'}_1^>(x,y) = b_1^>(x,y) - 1$, and the defect values are not affected. Hence the result holds for $\pi'$, and we are through.
\end{proof}

\subsection{Separating the contributions of corners, boundaries and defects}\label{Ss:Separating}

Recall that maximizing $f(\pi)$ is equivalent to minimizing $g(\pi)$. The difficulty at hand is that the row and column coordinates cannot be optimized independently because the multisets $B$, $C$ consist of values on cells, not of independent row and column values. Another difficulty is that the corners and defects are counted twice in $g(\pi)$. In this subsection we derive a crucial inequality for $g(\pi)$ that removes most of the difficulties.

For a permutation $\pi$ of $I(n_1,n_2)$, let
\begin{displaymath}
    h(\pi) = \sum_{i=1}^2\left(\sum B_i^>+ \sum C_i^>- \sum B_i^<- \sum C_i^<\right).
\end{displaymath}
Furthermore, for $i\in\{1,2\}$ let
\begin{displaymath}
    x_i = |B_i^<|+|C_i^<|-|B_i^>|-|C_i^>|.
\end{displaymath}

\begin{proposition}\label{Pr:Key}
Let $\pi$ be a permutation of $I(n_1,n_2)$ with $n_i=2t_i$ and $n_1\le n_2$. Then
\begin{displaymath}
    g(\pi)\ge h(\pi) + 4 + n_1 + \sum_{i=1}^2 (t_i+1/2)x_i.
\end{displaymath}
\end{proposition}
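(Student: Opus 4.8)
The plan is to compute the difference $g(\pi)-h(\pi)$ explicitly, bound the corner and defect sums from below by their cardinalities times the extremal admissible values, and then observe that the balance relation of Proposition \ref{Pr:Balance} converts the resulting cardinality expression precisely into $\sum_i(t_i+1/2)x_i$ plus a clean remainder. First I would subtract the two definitions. In $g(\pi)-h(\pi)$ the boundary sums $\sum B_i^{>}$ and $\sum B_i^{<}$ cancel, leaving
\begin{displaymath}
    g(\pi)-h(\pi)=\sum_{i=1}^2\left(\sum C_i^> - \sum C_i^< + 2\sum D_i^> - 2\sum D_i^<\right).
\end{displaymath}
Every element of $C_i^>$ and $D_i^>$ is at least $t_i+1$, and every element of $C_i^<$ and $D_i^<$ is at most $t_i$, directly from the definition of large and small values. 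Writing $c_i^{>}=|C_i^>|$, $c_i^{<}=|C_i^<|$, $d_i^{>}=|D_i^>|$, $d_i^{<}=|D_i^<|$ (and similarly $b_i^{>}$, $b_i^{<}$), this yields
\begin{displaymath}
    g(\pi)-h(\pi)\ge\sum_{i=1}^2\left((t_i+1)c_i^> - t_i c_i^< + 2(t_i+1)d_i^> - 2t_i d_i^<\right).
\end{displaymath}

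Next I would bring in Proposition \ref{Pr:Balance}, which reads $b_i^> + 2c_i^> + 2d_i^> = b_i^< + 2c_i^< + 2d_i^<$, equivalently $b_i^< - b_i^> = 2(c_i^>-c_i^<)+2(d_i^>-d_i^<)$. Substituting this into $x_i=(b_i^<-b_i^>)+(c_i^<-c_i^>)$ produces the key identity $x_i=(c_i^>-c_i^<)+2(d_i^>-d_i^<)$, so that $(t_i+1/2)x_i=(t_i+1/2)(c_i^>-c_i^<)+(2t_i+1)(d_i^>-d_i^<)$. Comparing coefficients, the $i$-th summand of the lower bound minus $(t_i+1/2)x_i$ equals exactly $\tfrac12(c_i^>+c_i^<)+(d_i^>+d_i^<)=\tfrac12|C_i|+|D_i|$. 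Since each coordinate projection of the four corners gives $|C_i|=4$, and since Corollary \ref{Cr:Grid} supplies $|D_1|+|D_2|=|D_1\cup D_2|\ge 2t_1=n_1$, summing over $i\in\{1,2\}$ contributes a remainder of $4+n_1$, which is precisely the claimed inequality.

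The computation itself is short; the delicate point is that the coefficient $t_i+1/2$ appearing in the statement is exactly the one for which the balance identity collapses the weighted corner and defect sums into the pure cardinality remainder $\tfrac12|C_i|+|D_i|$, with nothing left over. The main thing to execute carefully is therefore the sign and coefficient bookkeeping in the subtraction, together with verifying that the extremal bounds on the corner and defect sums are applied so that no slack beyond the $4+n_1$ already accounted for by $|C_i|=4$ and Corollary \ref{Cr:Grid} is introduced or lost.
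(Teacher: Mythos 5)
Your proof is correct and is essentially the paper's own argument: both rest on the same ingredients (the extremal bounds that values in $C_i^>$, $D_i^>$ are at least $t_i+1$ and values in $C_i^<$, $D_i^<$ are at most $t_i$, the balance identity of Proposition \ref{Pr:Balance} pivoted at the coefficient $t_i+1/2$, the count $|C_i|=4$, and Corollary \ref{Cr:Grid} for $|D_1\cup D_2|\ge n_1$). The only difference is organizational --- you bound all sums first and then collapse via the balance identity, while the paper interleaves the identity with the defect bound and treats the corner terms separately --- but the bookkeeping and the resulting remainder $4+n_1$ are identical.
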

\begin{proof}
By Proposition \ref{Pr:Balance},
\begin{displaymath}
    |D_i^>| - |D_i^<| = (|B_i^<|+2|C_i^<|-|B_i^>|-2|C_i^>|)/2 = x_i/2 + (|C_i^<|-|C_i^>|)/2,
\end{displaymath}
and hence
\begin{align}
    \sum D_i^> - \sum D_i^< &\ge (t_i+1)|D_i^>| - t_i|D_i^<| \notag\\
        &= (t_i+1/2)(|D_i^>|-|D_i^<|) + (|D_i^>|+|D_i^<|)/2\notag\\
        &= (t_i+1/2)(|D_i^>|-|D_i^<|) + |D_i|/2\label{Eq:IneqD}\\
        &= (t_i+1/2)x_i/2 + (t_i+1/2)(|C_i^<|-|C_i^>|)/2 + |D_i|/2.\notag
\end{align}
Moreover,
\begin{align}
    (t_i+1/2)&(|C_i^<|-|C_i^>|) + \sum C_i^> - \sum C_i^<\notag \\
    &=\left(\sum C_i^> - (t_i+1/2)|C_i^>|\right) + \left((t_i+1/2)|C_i^<| - \sum C_i^<\right)\label{Eq:IneqC}\\
    &\ge |C_i^>|/2 + |C_i^<|/2 = |C_i|/2=2.\notag
\end{align}
Using \eqref{Eq:IneqD} and \eqref{Eq:IneqC}, we therefore have
\begin{align*}
    g(\pi) &= h(\pi) + \sum_{i=1}^2\left(2\sum D_i^>-2\sum D_i^< + \sum C_i^> - \sum C_i^<\right)\\
        &\ge h(\pi) + \sum_{i=1}^2\left((t_i+1/2)x_i + (t_i+1/2)(|C_i^<|-|C_i^>|) + \sum C_i^> - \sum C_i^< + |D_i|\right)\\
        &\ge h(\pi) + \sum_{i=1}^2 ((t_i+1/2)x_i + 2 + |D_i| )\\
        &= h(\pi) + 4 + |D_1\cup D_2| + \sum_{i=1}^2(t_i+1/2)x_i.
\end{align*}
Finally, by Corollary \ref{Cr:Grid} we have
\begin{equation}\label{Eq:Ineqd}
    |D_1\cup D_2|\ge n_1,
\end{equation}
finishing the proof.
\end{proof}

The inequality of Proposition \ref{Pr:Key} is an equality if and only if the three inequalities \eqref{Eq:IneqD}, \eqref{Eq:IneqC}, \eqref{Eq:Ineqd} are equalities. As we shall see in Section \ref{Sc:Optimal}, these equalities are simultaneously attainable if $n_1$, $n_2$ are large enough.

\subsection{Best approximations to disks}

Let $n_1$, $n_2$ be positive integers, and let
\begin{displaymath}
    O=((n_1+1)/2,(n_2+1)/2)
\end{displaymath}
be the geometric center of $I(n_1,n_2)$. For a subset $A$ of $I(n_1,n_2)$ let
\begin{displaymath}
    w(A) = \sum_{x\in A} d(x,O),
\end{displaymath}
and for a positive integer $k$ let
\begin{displaymath}
    w_k(n_1,n_2) = \min\setof{w(A)}{A\subseteq I(n_1,n_2),\,|A|=k}.
\end{displaymath}

A subset $A$ of $I(n_1,n_2)$ is said to be a \emph{(best approximation to a) disk} if $w(A)= w_{|A|}(n_1,n_2)$.

In order to understand disks in $I$, define the $i$th \emph{layer} $\Lambda_i$ by
\begin{displaymath}
    \Lambda_i=\setof{x\in I(n_1,n_2)}{d(x,O)=i}.
\end{displaymath}
Figure \ref{Fg:Layers} depicts the layers $\Lambda_1$, $\Lambda_2$, $\Lambda_3$ in the situation when $n_1=4$ and $n_2\ge 6$ is even.

\begin{figure}[h]
\begin{tikzpicture}[scale=0.6]
\draw[very thick] (-4,0)--(4,0);
\draw[very thick] (0,-2)--(0,2);
\draw (-1,-2)--(-1,2);
\draw (1,-2)--(1,2);
\draw (-2,-2)--(-2,2);
\draw (2,-2)--(2,2);
\draw (-3,-1)--(-3,1);
\draw (3,-1)--(3,1);
\draw (-3,1)--(3,1);
\draw (-3,-1)--(3,-1);
\draw (-2,2)--(2,2);
\draw (-2,-2)--(2,-2);
\node at (0.5,0.5) {$1$};
\node at (-0.5,0.5) {$1$};
\node at (0.5,-0.5) {$1$};
\node at (-0.5,-0.5) {$1$};
\node at (0.5,1.5) {$2$};
\node at (-0.5,1.5) {$2$};
\node at (0.5,-1.5) {$2$};
\node at (-0.5,-1.5) {$2$};
\node at (1.5,0.5) {$2$};
\node at (-1.5,0.5) {$2$};
\node at (1.5,-0.5) {$2$};
\node at (-1.5,-0.5) {$2$};
\node at (1.5,1.5) {$3$};
\node at (-1.5,1.5) {$3$};
\node at (1.5,-1.5) {$3$};
\node at (-1.5,-1.5) {$3$};
\node at (2.5,0.5) {$3$};
\node at (-2.5,0.5) {$3$};
\node at (2.5,-0.5) {$3$};
\node at (-2.5,-0.5) {$3$};
\end{tikzpicture}
\caption{The first three layers around the geometric center of $I(4,n_2)$.}
\label{Fg:Layers}
\end{figure}

\begin{lemma}\label{Lm:Disk}
A subset $A$ of the grid $I(n_1,n_2)$ is a disk if and only if $A=P\cup \bigcup_{i=1}^r \Lambda_i$ for some $r\ge 0$ and $P\subseteq \Lambda_{r+1}$.
\end{lemma}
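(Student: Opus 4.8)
The plan is to treat this as a greedy rearrangement statement. The guiding observation is that $w(A)=\sum_{x\in A}d(x,O)$ merely adds up the distances of the chosen cells to the center $O$, so minimizing $w$ among subsets of a fixed cardinality $k$ is exactly the problem of selecting the $k$ cells closest to $O$. The function $d(\cdot,O)$ sorts the cells of $I$ into the layers $\Lambda_1,\Lambda_2,\dots$, and, since $n_1,n_2$ are even, the realized distances are precisely the consecutive positive integers, so these layers partition $I$. In this language the asserted form $A=P\cup\bigcup_{i=1}^r\Lambda_i$ with $P\subseteq\Lambda_{r+1}$ says exactly that $A$ is \emph{downward closed} for the distance to $O$: it contains every cell strictly closer to $O$ than some cell it already contains. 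First I would record this reformulation, namely that $A$ has the stated form if and only if $y\in A$ and $d(x,O)<d(y,O)$ force $x\in A$. The ``only if'' part is immediate, and for the ``if'' part one sets $r+1=\max_{y\in A}d(y,O)$ and checks that downward closure forces $\Lambda_1,\dots,\Lambda_r\subseteq A$ while $A\subseteq\bigcup_{i\le r+1}\Lambda_i$ by maximality of this maximum.

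For the direction that a set of the stated form is a disk, I would argue by a counting exchange. Fix $A=P\cup\bigcup_{i=1}^r\Lambda_i$ and let $B$ be any subset with $|B|=|A|=k$. Since $A$ contains all cells at distance at most $r$ and only cells at distance at most $r+1$, every cell of $B\setminus A$ has $d(\cdot,O)\ge r+1$ while every cell of $A\setminus B$ has $d(\cdot,O)\le r+1$. As $|B\setminus A|=|A\setminus B|$, the sum of the distances over $B\setminus A$ is at least $(r+1)|B\setminus A|=(r+1)|A\setminus B|$, which in turn is at least the sum over $A\setminus B$; hence $w(B)\ge w(A)$, and $A$ is a disk.

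For the converse I would use the same exchange in contrapositive form. Suppose $A$ is a disk that is \emph{not} of the stated form. By the reformulation there are cells $x\notin A$ and $y\in A$ with $d(x,O)<d(y,O)$; concretely, take the largest $r$ with $\Lambda_1\cup\dots\cup\Lambda_r\subseteq A$, pick $x\in\Lambda_{r+1}\setminus A$, and pick $y\in A$ with $d(y,O)\ge r+2$ (such a $y$ exists because $A\not\subseteq\bigcup_{i\le r+1}\Lambda_i$). Replacing $y$ by $x$ yields $A'=(A\setminus\{y\})\cup\{x\}$ with $|A'|=|A|$ and $w(A')=w(A)-d(y,O)+d(x,O)<w(A)$, contradicting the minimality of $w(A)=w_{|A|}(n_1,n_2)$. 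Hence every disk has the stated form.

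This argument is essentially routine, and I do not expect a genuine obstacle; the care lies in the bookkeeping. The main point to get right is the reformulation as downward closure together with the correct choice of the index $r$, so that the single partially filled layer $\Lambda_{r+1}$ is identified unambiguously and the degenerate cases $r=0$ and $A=\emptyset$ are covered. I would also make explicit at the outset that, because both dimensions are even, $d(x,O)\ge 1$ for every $x\in I$ and the layers $\Lambda_1,\Lambda_2,\dots$ genuinely partition the grid with consecutive integer indices; this is what makes the layer indexing in the statement coincide with the distance profile and leaves no cells unaccounted for.
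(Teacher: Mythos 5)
Your proposal is correct and rests on the same idea as the paper's proof: the single-swap exchange $A'=(A\cup\{x\})\setminus\{y\}$ with $d(x,O)<d(y,O)$, which strictly decreases $w$ and shows that a disk must be downward closed, i.e.\ of the stated form. The paper compresses everything else into ``The result follows,'' whereas you make the converse explicit by comparing $A=P\cup\bigcup_{i=1}^r\Lambda_i$ against an arbitrary competitor $B$ of the same cardinality (every cell of $B\setminus A$ lies at distance $\ge r+1$ and every cell of $A\setminus B$ at distance $\le r+1$), and you also record the evenness point guaranteeing $\Lambda_0=\emptyset$ --- worthwhile explicitness, but the same method.
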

\begin{proof}
Suppose that there are $i<j$, $x\in \Lambda_i$, $y\in \Lambda_j$ such that $x\not\in A$ and $y\in A$. Then $A' = (A\cup \{x\})\setminus \{y\}$ satisfies $|A'|=|A|$ and $w(A') =  w(A) + i - j < w(A)$, a contradiction. The result follows.
\end{proof}

Before we return to the problem of neighbor separation, let us explicitly find the values $w_k(n_1,n_2)$ when $n_1=2t_1$, $n_2=2t_2$ are even and $n_1\le n_2$, using Lemma \ref{Lm:Disk}. The formula is straightforward but a bit cumbersome. First, we have
\begin{displaymath}
    |\Lambda_i| = \left\{\begin{array}{ll}
        4i,&\text{ if }0\le i\le t_1,\\
        2n_1,&\text{ if }t_1<i\le t_2,\\
        2n_1 - 4(i-t_2) = 2(n_1+n_2-2i),&\text{ if }t_2<i\le t_1+t_2,\\
        0,&\text{ if }t_1+t_2<i.
    \end{array}\right.
\end{displaymath}
Given $k$ such that $0\le k\le n_1n_2$, let $r$ be the largest integer such that $\sum_{i=1}^r |\Lambda_i|\le k$. Then, by Lemma \ref{Lm:Disk},
\begin{equation}\label{Eq:WeightedBall}
    w_k(n_1,n_2) = \sum_{i=1}^r i|\Lambda_i| + (r+1)\left(k-\sum_{i=1}^r|\Lambda_i|\right) = (r+1)k - \sum_{i=1}^r(r+1-i)|\Lambda_i|.
\end{equation}

\medskip

Here is the key connection between permutations and disks in $I(n_1,n_2)$, with $h(\pi)$ and $x_i$ as in Subsection \ref{Ss:Separating}.

\begin{lemma}\label{Lm:Ball}
Let $\pi$ be a permutation of $I(n_1,n_2)$ with $n_i=2t_i$. Then
\begin{displaymath}
    w(B\cup C) = h(\pi) + \sum_{i=1}^2(t_i+1/2)x_i.
\end{displaymath}
\end{lemma}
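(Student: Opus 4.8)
The plan is to evaluate $w(B\cup C)$ directly and match it, coordinate by coordinate, against the right-hand side. Since $d$ is the $L_1$ metric and, because $n_i=2t_i$, the center is $O=(t_1+1/2,t_2+1/2)$, I would first separate the two coordinate contributions,
\begin{displaymath}
    w(B\cup C) = \sum_{i=1}^2\sum_{x\in B\cup C}\left|x_i-(t_i+1/2)\right|,
\end{displaymath}
and then handle each $i\in\{1,2\}$ independently.

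The key simplification is that the center coordinate $t_i+1/2$ lies strictly between the largest possible small value $t_i$ and the smallest possible large value $t_i+1$. Thus every absolute value resolves with a predictable sign: if $x_i$ is small then $|x_i-(t_i+1/2)|=t_i+1/2-x_i$, and if $x_i$ is large then $|x_i-(t_i+1/2)|=x_i-(t_i+1/2)$. Because $\pi$ is a permutation, the cells counted by $B$ and by $C$ carry distinct values, so $(B\cup C)_i=B_i\cup C_i$ as multisets and this union respects the small/large split, i.e.\ $(B\cup C)_i^<=B_i^<\cup C_i^<$ and $(B\cup C)_i^>=B_i^>\cup C_i^>$. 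Removing the absolute values and collecting the multiples of $t_i+1/2$ apart from the sums of the values themselves, the $i$th coordinate contribution becomes
\begin{displaymath}
    \left(\sum B_i^>+\sum C_i^>-\sum B_i^<-\sum C_i^<\right) + (t_i+1/2)\left(|B_i^<|+|C_i^<|-|B_i^>|-|C_i^>|\right),
\end{displaymath}
where I use $\sum(B_i^<\cup C_i^<)=\sum B_i^<+\sum C_i^<$, $|B_i^<\cup C_i^<|=|B_i^<|+|C_i^<|$, and the analogous identities for the large values.

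The first parenthesis above is precisely the $i$th summand of $h(\pi)$, and the second factor is exactly $x_i$. Summing over $i\in\{1,2\}$ then gives $w(B\cup C)=h(\pi)+\sum_{i=1}^2(t_i+1/2)x_i$, as required. I expect no real obstacle: the statement is essentially a bookkeeping identity whose only delicate points are the multiset accounting (checking that $B$ and $C$ contribute disjointly, so that the small/large decomposition distributes over the union) and the use of $n_i=2t_i$ to guarantee that $t_i+1/2$ cleanly separates small from large values, so that all absolute values resolve with a consistent sign.
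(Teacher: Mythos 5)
Your proof is correct. The paper establishes the same identity by a different, more combinatorial route: it pairs as many small row (resp.\ column) values of $B\cup C$ as possible with large ones, observes that a matched pair $(t_1-i,\,t_1+j)$ contributes the same amount $i+j$ to $w(B\cup C)$ and to $h(\pi)$, and then accounts separately for the $|x_i|$ unmatched values, which is where the correction term $(t_i+1/2)x_i$ emerges (with a case split on whether the leftover values are small or large). Your argument instead expands $d(x,O)$ coordinate-wise, uses $n_i=2t_i$ to resolve every absolute value $|x_i-(t_i+1/2)|$ with a sign determined by the small/large dichotomy, and collects the multiples of $t_i+1/2$ directly; the disjointness of $B$ and $C$ (which in fact holds by definition, since $B$ is defined with $C$ removed, not only because $\pi$ is injective) justifies the multiset bookkeeping. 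Both proofs rest on the same elementary fact, but yours is purely mechanical with no case analysis on the sign of $x_i$, and is arguably shorter and more transparent; the paper's matching formulation instead makes visible the interpretation that balanced small/large pairs cost nothing extra and only the imbalance $x_i$ incurs the $(t_i+1/2)$ penalty.
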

\begin{proof}
Consider a pair of cells in $B\cup C$, one with a small row value, say $(t_1-i,u)$, and one with a large row value, say $(t_1+j,v)$. Then the contribution of the first coordinates of these two cells to $w(B\cup C)$ is $((t_1+1/2)-(t_1-i)) + ((t_1+j)-(t_1+1/2)) = i+j$. On the other hand, the analogous contribution to $h(\pi)$ is $t_1+j - (t_1-i) = i+j$. Similarly for small and large column values.

Now consider all cells of $B\cup C$. Pair together as many first coordinates of these cells as possible, subject to the constraint that small row values are matched with large row values. Pair column values similarly. For these matched pairs of coordinates, the contribution to $w(B\cup C)$ and $h(\pi)$ is the same, as we have just shown. If $x_1\ge 0$, there are $x_1$ small row values remaining (unmatched), otherwise there are $-x_1$ large row values remaining. Similarly for columns.

Let the $|x_1|=q$ unmatched row values be $i_1$, $\dots$, $i_q$. If $x_1\ge 0$, the contribution of these values to $h(\pi)$ is $-\sum_{j=1}^q i_j$, while the contribution to $w(B\cup C)$ is $\sum_{j=1}^q ((t_1+1/2)-i_j) = -\sum_{j=1}^q i_j + (t_1+1/2)q = - \sum_{j=1}^q i_j + (t_1+1/2)x_1$. If $x_1\le 0$, the contribution of these values to $h(\pi)$ is $\sum_{j=1}^q i_j$, while the contribution to $w(B\cup C)$ is $\sum_{j=1}^q (i_j - (t_1+1/2)) = \sum_{j=1}^q i_j + (t_1+1/2)x_1$. Similarly for columns, and the result follows.
\end{proof}

Since $|B\cup C| = 2n_1+2n_2-4$, we deduce from Proposition \ref{Pr:Key} and Lemma \ref{Lm:Ball}:

\begin{corollary}\label{Cr:Key}
Let $\pi$ be a permutation of $I(n_1,n_2)$ with $n_i=2t_i$ and $n_1\le n_2$. Then
\begin{displaymath}
    g(\pi)\ge 4+n_1+w(B\cup C).
\end{displaymath}
In particular,
\begin{equation}\label{Eq:Key}
    g(\pi)\ge 4+n_1+w_{2n_1+2n_2-4}(n_1,n_2).
\end{equation}
\end{corollary}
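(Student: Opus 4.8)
The plan is to chain together Proposition~\ref{Pr:Key} and Lemma~\ref{Lm:Ball}, which between them already isolate every quantity appearing in the claim. Proposition~\ref{Pr:Key} supplies the lower bound $g(\pi)\ge h(\pi)+4+n_1+\sum_{i=1}^2(t_i+1/2)x_i$, while Lemma~\ref{Lm:Ball} identifies the combination $h(\pi)+\sum_{i=1}^2(t_i+1/2)x_i$ as exactly the geometric weight $w(B\cup C)$. Substituting the latter into the former collapses the terms $h(\pi)$ and $\sum_{i=1}^2(t_i+1/2)x_i$ into the single quantity $w(B\cup C)$, and the first inequality $g(\pi)\ge 4+n_1+w(B\cup C)$ falls out with no further estimation.

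For the second, ostensibly sharper inequality \eqref{Eq:Key}, I would pass from $w(B\cup C)$ to the extremal value $w_{2n_1+2n_2-4}(n_1,n_2)$ by invoking the defining minimality of $w_k$. The only input required is that $B\cup C$ is a genuine subset of $I(n_1,n_2)$ of the stated size. The boundary cells of the $n_1\times n_2$ grid number $2n_1+2n_2-4$ (the two extreme columns contribute $n_1$ cells each and the two extreme rows contribute $n_2$ cells each, with the four corners double-counted and hence subtracted), and since $\pi$ is a permutation its image values are pairwise distinct, so $B\cup C$ is a set of cardinality exactly $2n_1+2n_2-4$. By the definition $w_k(n_1,n_2)=\min\setof{w(A)}{A\subseteq I(n_1,n_2),\,|A|=k}$, every subset $A$ of this size satisfies $w(A)\ge w_{2n_1+2n_2-4}(n_1,n_2)$, and in particular $w(B\cup C)\ge w_{2n_1+2n_2-4}(n_1,n_2)$; combining this with the first inequality gives \eqref{Eq:Key}.

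I expect no genuine obstacle here, since all of the analytic work has been carried out in the earlier results and this statement amounts to a bookkeeping combination. If anything calls for care, it is the multiset convention adopted in the Notation subsection: one must confirm that $B\cup C$, although defined there as a multiset, carries no repetitions because $\pi$ is injective, so that it legitimately competes in the minimization defining $w_k$ and the count $2n_1+2n_2-4$ is an exact cardinality rather than merely an upper bound.
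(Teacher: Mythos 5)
Your proposal is correct and matches the paper's own argument exactly: the paper likewise deduces the corollary by combining Proposition~\ref{Pr:Key} with Lemma~\ref{Lm:Ball} and then invoking the minimality defining $w_k$ together with the count $|B\cup C|=2n_1+2n_2-4$. Your added remark that injectivity of $\pi$ makes $B\cup C$ a genuine set of that cardinality is a reasonable bookkeeping check that the paper leaves implicit.
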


\section{Optimal solutions}\label{Sc:Optimal}

In this section we construct a family of optimal solutions and prove the main result, Theorem \ref{Th:Main}.

\subsection{A family of optimal solutions}

Suppose that $2t_1=n_1\le n_2=2t_2$. Note that the inequality \eqref{Eq:Key} for $\pi$ is in fact an equality if and only if all of the following conditions hold:
\begin{enumerate}
\item[$\bullet$] \eqref{Eq:IneqC} is an equality, that is, every value of $C_i^>$ is equal to $t_i+1$ and every value of $C_i^<$ is equal to $t_i$, which is equivalent to $C=\Lambda_1$.
\item[$\bullet$] \eqref{Eq:Ineqd} is an equality, that is, $|D_1\cup D_2|=n_1$. By Lemma \ref{Lm:Grid}, this is equivalent to the grid of $\pi$ being evenly split into a homogeneous half and a heterogeneous half, with all defects occurring across the dividing line. In particular, $x_1=x_2=0$. The actual placement of entries of $\Lambda_1$ into the corners determines the direction of the cut (we only have a choice if $n_1=n_2$) and whether all defects will be row defects or column defects. (See Example \ref{Ex:Sigma}.)
\item[$\bullet$] \eqref{Eq:IneqD} is an equality, that is, every value of $D_i^>$ is equal to $t_i+1$ and every value of $D_i^<$ is equal to $t_i$. We observe that the defects must then be either all row defects alternating between $t_1$ and $t_1+1$, or all column defects alternating between $t_2$ and $t_2+1$, as the following diagram illustrates:
    \begin{displaymath}
        \begin{array}{c|c}
            (<,<)&(<,>)\\
            (>,>)&(>,<)\\
            \vdots&\vdots
        \end{array}
        \quad\quad{\text{or}}\quad\quad
        \begin{array}{c|c}
            (<,<)&(>,<)\\
            (>,>)&(<,>)\\
            \vdots&\vdots
        \end{array}
    \end{displaymath}
\item[$\bullet$] $w(B\cup C) = w_{2n_1+2n_2-4}(n_1,n_2)$, that is, $B\cup C$ is a disk in $I(n_1,n_2)$.
\end{enumerate}

Construction \ref{Co:Pi} below yields the family $\mathcal P(n_1,n_2)$ of all permutations on $I(n_1,n_2)$ that satisfy all these conditions. For certain values of $n_1$ and $n_2$, steps (ii) and (iii) of the construction might not be completable due to the lack of suitable defect entries, and in some additional cases the construction can be completed with row defects but not with column defects (since $n_1\le n_2$). Nevertheless, it will be clear from the construction that $\mathcal P(n_1,n_2)$ is nonempty when $n_1$, $n_2$ are sufficiently large. We will give exact bounds on $n_1$, $n_2$ in Subsection \ref{Ss:Bounds}.

\begin{construction}\label{Co:Pi}
Let $2t_1=n_1\le n_2=2t_2$. The following procedure constructs all permutations $\pi$ of $\mathcal P(n_1,n_2)$ with a vertical defect line. (When $n_1=n_2$, the members of $\mathcal P(n_1,n_2)$ with a horizontal defect line are obtained by a transposed construction. When $n_1<n_2$, no members of $\mathcal P(n_1,n_2)$ have a horizontal defect line.)
\begin{enumerate}
\item[(i)] Populate the four corners of $\pi$ with cells from $\Lambda_1$ so that one half (left or right) of the grid is homogeneous and the other half is heterogeneous. This will determine if all defects will be row defects or column defects.
\item[(ii)] Populate the boundary with entries from $\Lambda_2$, $\Lambda_3$, and so on, starting with $\Lambda_2$, always filling in four entries at a time, using one entry from each $S_1^>\times S_2^>$, $S_1^>\times S_2^<$, $S_1^<\times S_2^>$, $S_1^<\times S_2^<$. (Note that the double checkerboard forces $B\cup C$ to have the same number of entries of each of the $4$ kinds.) In case of row defects, make sure that the two defects in the top and bottom row across the defect line have values $t_1$ and $t_1+1$. In case of column defects, the defect values must be $t_2$ and $t_2+1$.
\item[(iii)] Populate half of the interior cells along the defect line, one in each row, so that the row defects alternate values $t_1$, $t_1+1$ (or the column defects alternate values $t_2$, $t_2+1$).
\item[(iv)] Populate the remaining cells arbitrarily, subject only to the restrictions imposed by the double checkerboard.
\end{enumerate}
\end{construction}

Let us illustrate Construction \ref{Co:Pi} with three examples that will be helpful in Subsection \ref{Ss:Bounds}. In all examples the partial grids are labeled according to the steps in Construction \ref{Co:Pi}. To save space and to improve legibility, we write $i,j$ instead of $(i,j)$ as the values of $\pi$.

\def\co{$+$}            
\def\bo{$-$}            
\def\de{$\square$}      
\def\bd{$\boxminus$}    
\def\cw{0.5cm} 

\begin{example}\label{Ex:Sigma}
Let $n_1=n_2=6$. The four grids in Figure \ref{Fg:OptimalSigma} show how to construct a permutation in $\mathcal P(n_1,n_2)$ with a vertical defect line and row defects.

In step (i) we have made the choice to have a vertical defect line (by placing the homogeneous values in the same column rather than in the same row), to have homogeneous values on the left, and to have row defects rather than column defects (by not transposing the locations of the entries $(3,4)$ and $(4,3)$, say). It so happens that any of the $16 = 4\cdot 2\cdot 2$ valid corner placements are completable here.

\begin{figure}[ht]
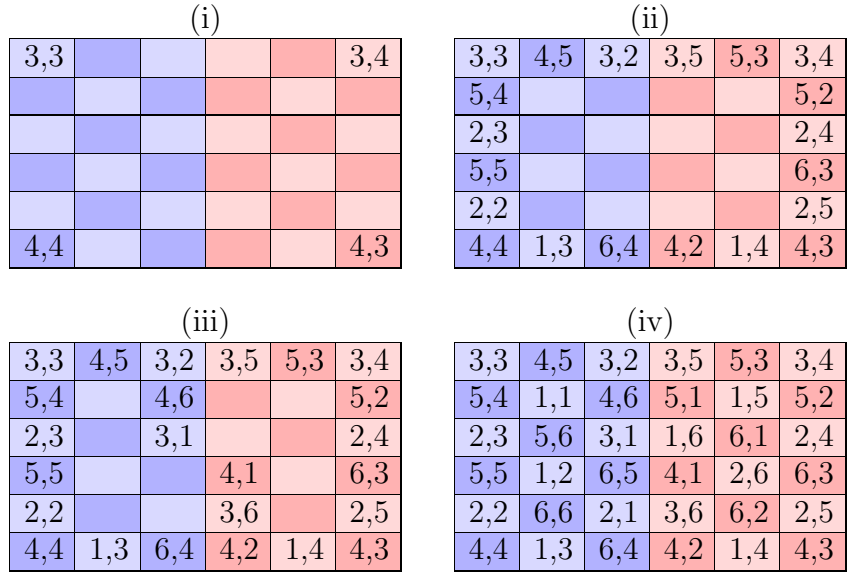

\begin{displaymath}
    \begin{array}{cc}
        \begin{array}{c}
            \text{(i)}\\
            \begin{tabular}{|p{\cw}|p{\cw}|p{\cw}|p{\cw}|p{\cw}|p{\cw}|}
                \hline \cll3,3&\chh&\cll&\clh&\chl&\clh3,4\\
                \hline \chh&\cll&\chh&\chl&\clh&\chl\\
                \hline \cll&\chh&\cll&\clh&\chl&\clh\\
                \hline \chh&\cll&\chh&\chl&\clh&\chl\\
                \hline \cll&\chh&\cll&\clh&\chl&\clh\\
                \hline \chh4,4&\cll&\chh&\chl&\clh&\chl4,3\\
                \hline
            \end{tabular}
        \end{array}
        &
        \begin{array}{c}
            \text{(ii)}\\
            \begin{tabular}{|p{\cw}|p{\cw}|p{\cw}|p{\cw}|p{\cw}|p{\cw}|}
                \hline \cll3,3&\chh4,5&\cll3,2&\clh3,5&\chl5,3&\clh3,4\\
                \hline \chh5,4&\cll&\chh&\chl&\clh&\chl5,2\\
                \hline \cll2,3&\chh&\cll&\clh&\chl&\clh2,4\\
                \hline \chh5,5&\cll&\chh&\chl&\clh&\chl6,3\\
                \hline \cll2,2&\chh&\cll&\clh&\chl&\clh2,5\\
                \hline \chh4,4&\cll1,3&\chh6,4&\chl4,2&\clh1,4&\chl4,3\\
                \hline
            \end{tabular}
        \end{array}
        \\
        \\
        \begin{array}{c}
            \text{(iii)}\\
            \begin{tabular}{|p{\cw}|p{\cw}|p{\cw}|p{\cw}|p{\cw}|p{\cw}|}
                \hline \cll3,3&\chh4,5&\cll3,2&\clh3,5&\chl5,3&\clh3,4\\
                \hline \chh5,4&\cll&\chh4,6&\chl&\clh&\chl5,2\\
                \hline \cll2,3&\chh&\cll3,1&\clh&\chl&\clh2,4\\
                \hline \chh5,5&\cll&\chh&\chl4,1&\clh&\chl6,3\\
                \hline \cll2,2&\chh&\cll&\clh3,6&\chl&\clh2,5\\
                \hline \chh4,4&\cll1,3&\chh6,4&\chl4,2&\clh1,4&\chl4,3\\
                \hline
            \end{tabular}
        \end{array}
        &
        \begin{array}{c}
            \text{(iv)}\\
            \begin{tabular}{|p{\cw}|p{\cw}|p{\cw}|p{\cw}|p{\cw}|p{\cw}|}
                \hline \cll3,3&\chh4,5&\cll3,2&\clh3,5&\chl5,3&\clh3,4\\
                \hline \chh5,4&\cll1,1&\chh4,6&\chl5,1&\clh1,5&\chl5,2\\
                \hline \cll2,3&\chh5,6&\cll3,1&\clh1,6&\chl6,1&\clh2,4\\
                \hline \chh5,5&\cll1,2&\chh6,5&\chl4,1&\clh2,6&\chl6,3\\
                \hline \cll2,2&\chh6,6&\cll2,1&\clh3,6&\chl6,2&\clh2,5\\
                \hline \chh4,4&\cll1,3&\chh6,4&\chl4,2&\clh1,4&\chl4,3\\
                \hline
            \end{tabular}
        \end{array}
    \end{array}
\end{displaymath}
\caption{A permutation of the $6\times 6$ grid maximizing the separation of neighbors with respect to the $L_1$ metric.}\label{Fg:OptimalSigma}
\end{figure}

The following diagram corresponds to our choices of corner cells (\co), boundary cells (\bo), cells that contribute a defect (\de), and cells that are both on the boundary and contribute a defect (\bd). Note that we have $C=\Lambda_1$ and $B\cup C$ is a disk. Also note that in the row defect resulting from the neighboring cells $(3,2)$, $(3,5)$ we have arbitrarily assigned $(3,2)$ as the cell contributing the defect.
\begin{displaymath}
\begin{tikzpicture}[scale=0.5]
    \draw[thick] (-3,0)--(3,0);
    \draw[thick] (0,-3)--(0,3);
    \node at (-2.5,3.5) {$1$}; \node at (-1.5,3.5) {$2$}; \node at (-0.5,3.5) {$3$}; \node at (0.5,3.5) {$4$}; \node at (1.5,3.5) {$5$}; \node at (2.5,3.5) {$6$};          
    \node at (-3.5,2.5) {$1$}; \node at (-3.5,1.5) {$2$}; \node at (-3.5,0.5) {$3$}; \node at (-3.5,-0.5) {$4$}; \node at (-3.5,-1.5) {$5$}; \node at (-3.5,-2.5) {$6$};    
    \node at (0.5,0.5) {\co}; \node at (-0.5,0.5) {\co}; \node at (0.5,-0.5) {\co}; \node at (-0.5,-0.5) {\co}; 
    \node at (1.5,0.5) {\bo}; \node at (-1.5,0.5) {\bd}; \node at (1.5,-0.5) {\bo}; \node at (-1.5,-0.5) {\bd}; 
    \node at (0.5,1.5) {\bo}; \node at (-0.5,1.5) {\bo}; \node at (0.5,-1.5) {\bo}; \node at (-0.5,-1.5) {\bo};
    \node at (2.5,0.5) {\de}; \node at (-2.5,0.5) {\de}; \node at (2.5,-0.5) {\de}; \node at (-2.5,-0.5) {\de}; 
    \node at (1.5,1.5) {\bo}; \node at (-1.5,1.5) {\bo}; \node at (1.5,-1.5) {\bo}; \node at (-1.5,-1.5) {\bo};
    \node at (0.5,2.5) {\bo}; \node at (-0.5,2.5) {\bo}; \node at (0.5,-2.5) {\bo}; \node at (-0.5,-2.5) {\bo};
\end{tikzpicture}
\end{displaymath}
\end{example}

\begin{example}\label{Ex:Sigma2}
Let $n_1=4$ and $n_2=12$. Using the same notation as in Example \ref{Ex:Sigma}, here is a diagram for a permutation in $\mathcal P(4,12)$
\begin{displaymath}
\begin{tikzpicture}[scale=0.5]
    \draw[thick] (-6,0)--(6,0);
    \draw[thick] (0,-2)--(0,2);
    \node at (-5.5,2.5) {$1$}; \node at (-4.5,2.5) {$2$}; \node at (-3.5,2.5) {$3$}; \node at (-2.5,2.5) {$4$}; \node at (-1.5,2.5) {$5$}; \node at (-0.5,2.5) {$6$}; 
    \node at (0.5,2.5) {$7$}; \node at (1.5,2.5) {$8$}; \node at (2.5,2.5) {$9$}; \node at (3.5,2.5) {$10$}; \node at (4.5,2.5) {$11$}; \node at (5.5,2.5) {$12$};
    \node at (-6.5,1.5) {$1$}; \node at (-6.5,0.5) {$2$}; \node at (-6.5,-0.5) {$3$}; \node at (-6.5,-1.5) {$4$}; 
    \node at (0.5,0.5) {\co}; \node at (-0.5,0.5) {\co}; \node at (0.5,-0.5) {\co}; \node at (-0.5,-0.5) {\co}; 
    \node at (1.5,0.5) {\bo}; \node at (-1.5,0.5) {\bo}; \node at (1.5,-0.5) {\bo}; \node at (-1.5,-0.5) {\bo}; 
    \node at (0.5,1.5) {\bo}; \node at (-0.5,1.5) {\bo}; \node at (0.5,-1.5) {\bo}; \node at (-0.5,-1.5) {\bo};
    \node at (2.5,0.5) {\bo}; \node at (-2.5,0.5) {\bo}; \node at (2.5,-0.5) {\bo}; \node at (-2.5,-0.5) {\bo}; 
    \node at (1.5,1.5) {\bo}; \node at (-1.5,1.5) {\bo}; \node at (1.5,-1.5) {\bo}; \node at (-1.5,-1.5) {\bo};
    \node at (3.5,0.5) {\bo}; \node at (-3.5,0.5) {\bd}; \node at (3.5,-0.5) {\bo}; \node at (-3.5,-0.5) {\bd}; 
    \node at (2.5,1.5) {\bo}; \node at (-2.5,1.5) {\bo}; \node at (2.5,-1.5) {\bo}; \node at (-2.5,-1.5) {\bo};
    \node at (-4.5,0.5) {\de}; \node at (-4.5,-0.5) {\de}; 
\end{tikzpicture}
\end{displaymath}
Figure \ref{Fg:OptimalSigma2} gives a partial grid of a permutation in $\mathcal P(4,12)$ obtained by steps (i)--(iii) of Construction \ref{Co:Pi}. Step (iv) can be routinely completed. Note that there is no solution with column defects because $B\cup C = \bigcup_{i=1}^4\Lambda_i$ and there are no suitable column defect entries left for the interior rows.
\end{example}

\renewcommand\cw{0.75cm} 
\begin{figure}[ht]
\begin{displaymath}
    \begin{array}{c}
        \text{(i)--(iii)}\\
    \begin{tabular}{|p{\cw}|p{\cw}|p{\cw}|p{\cw}|p{\cw}|p{\cw}|p{\cw}|p{\cw}|p{\cw}|p{\cw}|p{\cw}|p{\cw}|}
        \hline \clh2,7&\chl4,6&\clh2,8&\chl3,5&\clh1,7&\chl3,3&\chh3,9&\cll2,5&\chh4,7&\cll1,6&\chh3,8&\cll2,6\\
        \hline \chl3,4&\clh&\chl&\clh&\chl&\clh&\cll2,2&\chh&\cll&\chh&\cll&\chh4,8\\
        \hline \clh1,8&\chl&\clh&\chl&\clh&\chl3,2&\chh&\cll&\chh&\cll&\chh&\cll2,4\\
        \hline \chl3,6&\clh2,9&\chl4,5&\clh1,9&\chl4,4&\clh2,10&\cll2,3&\chh4,9&\cll1,4&\chh3,10&\cll1,5&\chh3,7\\
        \hline
    \end{tabular}
    \end{array}
\end{displaymath}
\caption{A partial, routinely completable permutation of the $4\times 12$ grid maximizing the separation of neighbors with respect to the $L_1$ metric.}\label{Fg:OptimalSigma2}
\end{figure}

\begin{example}\label{Ex:Sigma3}
Let $n_1=10$ and $n_2=12$. Using the same notation as in Example \ref{Ex:Sigma}, here is a diagram for a permutation in $\mathcal P(10,12)$
\begin{displaymath}
\begin{tikzpicture}[scale=0.5]
    \draw[thick] (-6,0)--(6,0);
    \draw[thick] (0,-5)--(0,5);
    \node at (-5.5,5.5) {$1$}; \node at (-4.5,5.5) {$2$}; \node at (-3.5,5.5) {$3$}; \node at (-2.5,5.5) {$4$}; \node at (-1.5,5.5) {$5$}; \node at (-0.5,5.5) {$6$}; 
    \node at (0.5,5.5) {$7$}; \node at (1.5,5.5) {$8$}; \node at (2.5,5.5) {$9$}; \node at (3.5,5.5) {$10$}; \node at (4.5,5.5) {$11$}; \node at (5.5,5.5) {$12$};
    \node at (-6.5,4.5) {$1$}; \node at (-6.5,3.5) {$2$}; \node at (-6.5,2.5) {$3$}; \node at (-6.5,1.5) {$4$}; \node at (-6.5,0.5) {$5$};  
    \node at (-6.5,-0.5) {$6$}; \node at (-6.5,-1.5) {$7$}; \node at (-6.5,-2.5) {$8$}; \node at (-6.5,-3.5) {$9$}; \node at (-6.5,-4.5) {$10$};
    \node at (0.5,0.5) {\co}; \node at (-0.5,0.5) {\co}; \node at (0.5,-0.5) {\co}; \node at (-0.5,-0.5) {\co}; 
    \node at (1.5,0.5) {\bo}; \node at (-1.5,0.5) {\bo}; \node at (1.5,-0.5) {\bo}; \node at (-1.5,-0.5) {\bo}; 
    \node at (0.5,1.5) {\bo}; \node at (-0.5,1.5) {\bo}; \node at (0.5,-1.5) {\bo}; \node at (-0.5,-1.5) {\bo};
    \node at (2.5,0.5) {\bo}; \node at (-2.5,0.5) {\bo}; \node at (2.5,-0.5) {\bo}; \node at (-2.5,-0.5) {\bo}; 
    \node at (1.5,1.5) {\bo}; \node at (-1.5,1.5) {\bo}; \node at (1.5,-1.5) {\bo}; \node at (-1.5,-1.5) {\bo};
    \node at (0.5,2.5) {\bo}; \node at (-0.5,2.5) {\bo}; \node at (0.5,-2.5) {\bo}; \node at (-0.5,-2.5) {\bo};
    \node at (3.5,0.5) {\bo}; \node at (-3.5,0.5) {\bd}; \node at (3.5,-0.5) {\bo}; \node at (-3.5,-0.5) {\bd}; 
    \node at (2.5,1.5) {\bo}; \node at (-2.5,1.5) {\bo}; \node at (2.5,-1.5) {\bo}; \node at (-2.5,-1.5) {\bo};
    \node at (1.5,2.5) {\bo}; \node at (-1.5,2.5) {\bo}; \node at (1.5,-2.5) {\bo}; \node at (-1.5,-2.5) {\bo};
    \node at (0.5,3.5) {\bo}; \node at (-0.5,3.5) {\bo}; \node at (0.5,-3.5) {\bo}; \node at (-0.5,-3.5) {\bo};
    \node at (4.5,0.5) {\de}; \node at (-4.5,0.5) {\de}; \node at (4.5,-0.5) {\de}; \node at (-4.5,-0.5) {\de}; 
    \node at (5.5,0.5) {\de}; \node at (-5.5,0.5) {\de}; \node at (5.5,-0.5) {\de}; \node at (-5.5,-0.5) {\de};
\end{tikzpicture}
\end{displaymath}
Figure \ref{Fg:OptimalSigma3} gives a partial grid of a permutation $\mathcal P(10,12)$ obtained by steps (i)--(iii) of Construction \ref{Co:Pi}. Step (iv) can be routinely completed. There is again no solution here with column defects since after populating $B\cup C$ we have only four suitable column defect entries left for the $8$ interior rows.
\end{example}

\renewcommand\cw{0.75cm} 
\begin{figure}[ht]
\begin{displaymath}
    \begin{array}{c}
        \text{(i)--(iii)}\\
    \begin{tabular}{|p{\cw}|p{\cw}|p{\cw}|p{\cw}|p{\cw}|p{\cw}|p{\cw}|p{\cw}|p{\cw}|p{\cw}|p{\cw}|p{\cw}|}
        \hline \cll5,6&\chh6,8&\cll5,5&\chh7,7&\cll4,6&\chh6,9&\chl6,3&\clh5,8&\chl6,5&\clh4,7&\chl7,6&\clh5,7\\
        \hline \chh7,8&\cll&\chh&\cll&\chh&\cll&\clh5,11&\chl&\clh&\chl&\clh&\chl6,4\\
        \hline \cll5,4&\chh&\cll&\chh&\cll&\chh6,11&\chl&\clh&\chl&\clh&\chl&\clh5,9\\
        \hline \chh8,7&\cll&\chh&\cll&\chh&\cll5,1&\clh&\chl&\clh&\chl&\clh&\chl7,5\\
        \hline \cll4,5&\chh&\cll&\chh&\cll&\chh&\chl6,1&\clh&\chl&\clh&\chl&\clh4,8\\
        \hline \chh6,10&\cll&\chh&\cll&\chh&\cll5,2&\clh&\chl&\clh&\chl&\clh&\chl8,6\\
        \hline \cll3,6&\chh&\cll&\chh&\cll&\chh&\chl6,2&\clh&\chl&\clh&\chl&\clh3,7\\
        \hline \chh7,9&\cll&\chh&\cll&\chh&\cll&\clh5,12&\chl&\clh&\chl&\clh&\chl7,4\\
        \hline \cll4,4&\chh&\cll&\chh&\cll&\chh6,12&\chl&\clh&\chl&\clh&\chl&\clh5,10\\
        \hline \chh6,7&\cll3,5&\chh8,8&\cll2,6&\chh9,7&\cll5,3&\clh2,7&\chl9,6&\clh3,8&\chl8,5&\clh4,9&\chl6,6\\
        \hline
    \end{tabular}
    \end{array}
\end{displaymath}
\caption{A partial, routinely completable permutation of the $10\times 12$ grid maximizing the separation of neighbors with respect to the $L_1$ metric.}\label{Fg:OptimalSigma3}
\end{figure}

\subsection{Existence of $\mathcal P(n_1,n_2)$}\label{Ss:Bounds}

Let $n_1\le n_2$ be positive even integers. Recall that $\mathcal P(n_1,n_2)$ consists of all permutations $\pi$ of $I(n_1,n_2)$ for which the inequality \eqref{Eq:Key} is in fact an equality. The following lemma characterizes those parameters $n_1$, $n_2$ for which $\mathcal P(n_1,n_2)\ne\emptyset$.

\begin{lemma}\label{Lm:Bound}
Let $2t_1=n_1\le n_2=2t_2$, and let $r$ be the smallest integer for which $\Lambda=\bigcup_{i=1}^r \Lambda_i$ satisfies $|\Lambda|\ge |B\cup C|$. Then $\mathcal P(n_1,n_2)\ne\emptyset$ if and only if one of the following conditions holds:
\begin{enumerate}
\item[(i)] $|\Lambda|=|B\cup C|$ and $r\le t_2-t_1/2+1/2$,
\item[(ii)] $|\Lambda|>|B\cup C|$ and $r\le t_2-t_1/2+3/2$.
\end{enumerate}
\end{lemma}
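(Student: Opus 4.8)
The plan is to reduce the nonemptiness of $\mathcal{P}(n_1,n_2)$ to the completability of Construction \ref{Co:Pi}, and to isolate the single genuine obstruction, namely the supply of interior defect entries in step (iii). Recall that a member of $\mathcal{P}(n_1,n_2)$ is precisely a permutation meeting the four bulleted conditions preceding the construction: $C=\Lambda_1$, a straight defect line splitting the grid into equal homogeneous and heterogeneous halves, all defects alternating between $t_i$ and $t_i+1$ in a single coordinate, and $B\cup C$ a disk. Because $n_1\le n_2$, I would treat a vertical defect line with row defects (by Lemma \ref{Lm:Grid} this is the only option when $n_1<n_2$, and one of two symmetric options when $n_1=n_2$); here every defect entry has first coordinate $t_1$ or $t_1+1$. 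Steps (i), (ii), (iv) I would argue are always completable once the disk is fixed: this rests on the facts that $|B\cup C|=2n_1+2n_2-4=4(t_1+t_2-1)$ is divisible by $4$, that each layer $\Lambda_i$ splits evenly among the four colors (by reflecting across the two central lines of the grid), and that $\Lambda_2\subseteq B$ supplies the two top/bottom boundary defect entries whenever $r\ge 2$.

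The heart of the matter is step (iii). There are $n_1=2t_1$ row defects, one per row, alternating $t_1,t_1+1$; the top and bottom rows use two entries lying in $B\cup C$, while each of the remaining $n_1-2$ interior rows needs an entry of value $(t_1,\cdot)$ or $(t_1+1,\cdot)$ lying \emph{outside} $B\cup C$, namely $t_1-1$ of each kind. I would then count the eligible values: since $(t_1,j)$ sits at distance $t_2+1-j$ from $O$ for $j\le t_2$ and at distance $j-t_2$ for $j>t_2$, exactly $2(t_2-r)$ values with first coordinate $t_1$ lie in layers beyond $\Lambda_r$, and likewise for first coordinate $t_1+1$. Each such value fits on the homogeneous side (if its column is small) or the heterogeneous side (if large), so the two available halves can absorb any split of the $t_1-1$ required rows. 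In case (i) the disk is forced to be $\bigcup_{i=1}^r\Lambda_i$, so this count is rigid and completability is equivalent to $t_1-1\le 2(t_2-r)$, i.e.\ $r\le t_2-t_1/2+1/2$.

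In case (ii) the excess $|\Lambda|-|B\cup C|$ is a positive multiple of $4$, hence at least $4$, so the partial layer $\Lambda_r$ may be chosen to exclude its four central-row values $(t_1,t_2+1-r)$, $(t_1,t_2+r)$, $(t_1+1,t_2+1-r)$, $(t_1+1,t_2+r)$; these are one of each color, so the color balance of $B\cup C$ is preserved, and two further entries of each kind are freed. Completability then becomes $t_1-1\le 2(t_2-r+1)$, i.e.\ $r\le t_2-t_1/2+3/2$. For the converse, a given $\pi\in\mathcal{P}(n_1,n_2)$ fixes its disk and defect pattern, and the same count bounds the number of usable interior entries by $2(t_2-r)$ in case (i) and by $2(t_2-r+1)$ in case (ii), since $\Lambda_r$ contains exactly two values with first coordinate $t_1$; this forces the stated inequalities.

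It remains to confirm that row defects are the most permissive choice, so that failure of the stated bound really leaves $\mathcal{P}(n_1,n_2)$ empty. Repeating the count for column defects replaces $t_2$ by $t_1$, giving the requirement $r\le t_1/2+1/2$, which is strictly stronger when $t_1<t_2$ and coincides with the row bound when $t_1=t_2$; since a horizontal defect line is available only when $n_1=n_2$ (where transposition makes the two cases symmetric), the row-defect bound is always the weakest and therefore governs nonemptiness. The main obstacle I anticipate is making the ``routine'' completability of steps (i), (ii), (iv) fully rigorous: one must verify that the boundary ring admits a proper two-coloring matching both the disk's color multiplicities and the chosen split, and that the interior double checkerboard closes up consistently after the defect line is populated. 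The divisibility and per-layer balance facts should carry this through, but the bookkeeping is delicate, and the few small grids with $r<2$ or $t_1=1$ must be set aside as exceptions.
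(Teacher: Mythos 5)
Your proposal is correct and follows essentially the same route as the paper: both reduce nonemptiness to the supply of defect entries with row values $t_1$, $t_1+1$ lying outside the disk $B\cup C$ (row defects across a vertical line being the most permissive choice since $n_1\le n_2$), and your counts --- $t_1-1\le 2(t_2-r)$ in case (i), and $t_1-1\le 2(t_2-r)+2$ after freeing the four central-row entries of $\Lambda_r$ in case (ii) --- are exactly the paper's inequalities $4(t_2-r)\ge n_1-2$ and $4(t_2-r)\ge n_1-6$. The differences are presentational: the paper disposes of the small cases $n_1=2$ and $n_1=4$ by direct verification rather than setting them aside (your uniform count in fact covers them, since $r\le t_2$), and it asserts rather than computes that column defects are never more permissive than row defects.
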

\begin{proof}
We will again only discuss permutations with a vertical defect line. In order to have $\mathcal P(n_1,n_2)\ne\emptyset$, $B\cup C$ must form a disk in $I(n_1,n_2)$. Note that our choice of $r$ is the smallest possible so that $B\cup C$ can be chosen as a subset of layers $\Lambda_1$, $\dots$, $\Lambda_r$. Defects must be chosen either in rows $t_1$, $t_1+1$, or in columns $t_2$, $t_2+1$. Row defects can be selected whenever column defects can be selected (since $n_1\le n_2$). We will therefore focus on row defects. Then $\mathcal P(n_1,n_2)\ne\emptyset$ if and only if it is possible to select $t_1$ defects in row $t_1$ (one of which is in $B\cup C$), and $t_1$ defects in row $t_1+1$ (one of which is again in $B\cup C$).

Suppose that $n_1=2$. Then no defects outside of $B\cup C$ are needed. On the other hand, $\Lambda=B\cup C$ is the entire grid, $r=t_2$, $t_1=1$, so the inequality $r\le t_2-t_1/2+1/2$ is satisfied. We can therefore assume that $n_1>2$.

If $|\Lambda|=|B\cup C|$ (as in Examples \ref{Ex:Sigma2} and \ref{Ex:Sigma3}), we must pick $n_1-2$ defects outside of $\Lambda$, so we demand $4(t_2-r)\ge n_1-2$, which is equivalent to $r\le t_2-t_1/2+1/2$.

Suppose that $|\Lambda|>|B\cup C|$ (as in Example \ref{Ex:Sigma}). Then there are precisely four additional candidates for defects in $\Lambda$. Suppose that $n_1=4$. We can then always select the four needed defects in $\Lambda$ and, on the other hand, $r\le t_2-t_1/2+3/2$ holds because $r\le t_2$ and $t_1=2$. We can therefore assume that $n_1>4$, pick six defects in $\Lambda$, and we need to have $4(t_2-r)\ge n_1-6$, which is equivalent to $r\le t_2-t_1/2+3/2$.
\end{proof}

\begin{proposition} \label{Pr:SpecialCasesL1}
Let $n_1$, $n_2$ be positive even integers. Then $\mathcal P(n_1,n_2)\ne\emptyset$ except in the cases $n_1=n_2\in\{4$, $8$, $12$, $16\}$.
\end{proposition}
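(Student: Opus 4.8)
The plan is to verify the two numerical conditions of Lemma \ref{Lm:Bound} directly, after rephrasing them. Write $m=|B\cup C|=4(t_1+t_2-1)$ and $L(r)=\sum_{i=1}^r|\Lambda_i|$, so that $r$ is the least index with $L(r)\ge m$. Clearing denominators, condition (i) becomes $2t_2-t_1\ge 2r-1$ (the exact-fit case $L(r)=m$) and condition (ii) becomes $2t_2-t_1\ge 2r-3$ (the strict case $L(r)>m$). The layer sizes listed before \eqref{Eq:WeightedBall} make $L$ piecewise: $L(r)=2r(r+1)$ while $r\le t_1$, and $L$ grows linearly once $r>t_1$. Since $L(t_2)\ge m$ simplifies to $(t_1-1)(2t_2-t_1-2)\ge 0$, which holds for $1\le t_1\le t_2$, we always have $r\le t_2$, so exactly these two regimes occur.

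The generic case $n_1<n_2$, i.e.\ $t_2\ge t_1+1$, I would dispose of by showing both conditions always hold. In the quadratic regime $r\le t_1$, put $s=t_1+t_2$; since $t_2\ge t_1+1$ gives $t_2\ge (s+1)/2$, we get $2t_2-t_1=3t_2-s\ge (s+3)/2$, while $L(r-1)=2(r-1)r<m$ gives $s\ge (r^2-r+4)/2$ in the strict case and $s=(r^2+r+2)/2$ in the exact case. Substituting turns the two targets into $r^2-9r+22\ge 0$ and $(r-3)(r-4)\ge 0$, which hold for every integer $r$ (the first has negative discriminant, the second has no integer strictly between its roots). In the linear regime $r>t_1$, solving $L(r-1)<m$ for $r$ and comparing against $t_2-t_1/2+3/2$ (respectively $t_2-t_1/2+1/2$) collapses everything to the single inequality $(t_1-1)(t_2-t_1-1)\ge 0$, valid because $t_1\ge 1$ and $t_2\ge t_1+1$. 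Hence $\mathcal P(n_1,n_2)\ne\emptyset$ whenever $n_1<n_2$.

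It remains to treat the square case $n_1=n_2=2t$, where $t_1=t_2=t$, only the regime $r\le t$ occurs, $L(r)=2r(r+1)$, and $r$ is least with $r(r+1)\ge 4t-2$. The conditions now read $2r\le t+1$ (exact, when $4t=r^2+r+2$) and $2r\le t+3$ (strict). In the strict case $(r-1)r\le 4t-3$ gives $t\ge (r^2-r+3)/4$, and $(r^2-r+3)/4\ge 2r-3$ is $r^2-9r+15\ge 0$, automatic for every integer $r$ outside $\{3,4,5,6\}$; in the exact case $2r\le t+1$ is $(r-1)(r-6)\ge 0$, automatic outside $\{2,3,4,5\}$. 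Both exceptional ranges force $r(r+1)\le 42$, hence $t\le 11$, so I would finish with a finite check of $t\in\{1,\dots,11\}$. It produces exactly the strict failures $t=4$ ($r=4$), $t=6$ ($r=5$) and the exact-fit failures $t=2$ ($r=2$), $t=8$ ($r=5$); for $t\ge 12$ one has $r\ge 7$, so both automatic ranges apply and no further failures arise. Thus $\mathcal P(n_1,n_2)=\emptyset$ precisely for $n_1=n_2\in\{4,8,12,16\}$.

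The hard part is the bookkeeping at the tight boundary, since the inequalities above are equalities in several extremal configurations (the exact-fit grids and the transition $r=t_1$ between the two layer regimes). The half-integer arithmetic in passing from $L(r-1)<m$ to an integer lower bound for $s$, and in rounding the thresholds $t_2-t_1/2\pm\text{const}$, must be handled with care, as an off-by-one would spuriously add or remove a boundary grid. One must also confirm that the finite check is genuinely exhaustive: exact fit additionally forces $r\equiv 1,2\pmod 4$, which is what prevents spurious candidates in the range $2\le r\le 5$ and pins the exact-fit failures to $t\in\{2,8\}$. Everything else is the routine discriminant and enumeration work indicated above.
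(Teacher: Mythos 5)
Your proposal is correct, and at bottom it checks the same thing the paper checks: the numerical conditions of Lemma \ref{Lm:Bound}, after establishing $r\le t_2$ and splitting into the quadratic regime $r\le t_1$ and the linear regime $r>t_1$. Where you genuinely diverge is in the parametrization and organization of the algebra. The paper introduces $u=t_2-t_1$, works with the real root $\rho=(\sqrt{16t_1+8u-7}-1)/2$ of the layer-count quadratic, and compresses everything into the single sufficient inequality $t_1^2+(4u-12)t_1+(4u^2+11)\ge 0$; because that inequality is lossy, its failure set (namely $u=1$, $t_1=4$ together with $u=0$, $2\le t_1\le 10$) strictly contains the true exceptions, and the proof must finish by testing each candidate against Lemma \ref{Lm:Bound} directly --- in particular the spurious candidate $(n_1,n_2)=(8,10)$. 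You instead keep all inequalities in terms of $r$ (via $r^2-9r+22\ge 0$ and $(r-3)(r-4)\ge 0$ in the rectangular case, $r^2-9r+15\ge 0$ and $(r-1)(r-6)\ge 0$ in the square case), and you split first on $n_1<n_2$ versus $n_1=n_2$. This buys two things: the rectangular case is settled purely algebraically, with no exceptional candidates at all, and the finite verification is cleanly confined to square grids with $t\le 11$, where your enumeration (failures exactly at $t\in\{2,4,6,8\}$, with the exact-fit congruence $r\equiv 1,2\pmod 4$ pinning the exact-fit failures to $t\in\{2,8\}$) matches the paper's conclusion. One caveat to tighten in a final write-up: in the linear-regime exact-fit case, the phrase ``solving $L(r-1)<m$ for $r$'' is not quite sufficient --- when $t_1$ is even that strict bound leaves room for an off-by-one against the threshold $t_2-t_1/2+1/2$ --- and you should instead solve the equation $L(r)=m$ itself, which does collapse to $(t_1-1)(t_2-t_1-1)\ge 0$ exactly as you claim. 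This is precisely the half-integer rounding issue you yourself flag, so it is a presentational fix rather than a gap.
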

\begin{proof}
Let $n_i=2t_i$, $t_2=t_1+u$, and let $r$ be the smallest integer such that $\Lambda=\bigcup_{i=1}^r \Lambda_i$ satisfies $|\Lambda|\ge |B\cup C|=4(t_1+t_2-1)= 4(2t_1+u-1)$. Recall that $|\Lambda_i|=4i$ if $1\le i\le t_1$ and $|\Lambda_i|=4t_1$ if $t_1<i\le t_2$.

If $t_1=1$ then $r=t_2$ and the conditions of Lemma \ref{Lm:Bound} are satisfied. We can therefore assume that $t_1>1$.

We claim that $r\le t_2$. Indeed, $\sum_{i=1}^{t_2}|\Lambda_i|\ge |B\cup C|$ if and only if $\sum_{i=1}^{t_1}|\Lambda_i| + \sum_{i=t_1+1}^{t_2}|\Lambda_i| = 2t_1(t_1+1) + 4t_1(t_2-t_1) \ge 4(t_1+t_2-1)$, i.e., $2t_2(t_1-1)\ge (t_1+2)(t_1-1)$, $2t_2\ge t_1+2$, which holds because $t_2\ge t_1>1$.

Case 1: Suppose that $r>t_1$ and $t_2<3t_1/2-1/2$. Then $2t_1(t_1+1)\le \sum_{i=1}^{r-1}|\Lambda_i|<|B\cup C|=4(t_1+t_2-1) < 4(t_1+3t_1/2-1/2-1) = 10t_1-6$, so $t_1^2-4t_1+3<0$ and $t_1=2$. From $t_2<3t_1/2-1/2$ we then deduce $t_2=2$, so $r=2$, a contradiction.

Case 2: Suppose that $r>t_1$ and $t_2\ge 3t_1/2-1/2$, the latter yielding $m=t_2-t_1/2+1/2\ge t_1$. If $r\le m$ then the conditions of Lemma \ref{Lm:Bound} are satisfied, so suppose that $r>m$. Then $2t_1(t_1+1) + 4t_1(m-t_1) = \sum_{i=1}^m|\Lambda_i| \le \sum_{i=1}^{r-1}|\Lambda_i| < 4(t_1+t_2-1)$, which is equivalent to $t_2(t_1-1)<(t_1-1)(t_1+1)$ and thus to $t_2<t_1+1$. Hence $t_2=t_1$, a contradiction with $r\le t_2$.

Case 3: Suppose that $r\le t_1$. We have $|\Lambda|=\sum_{i=1}^r 4i = 2r(r+1)$. The positive root of the quadratic equation $2r(r+1)=4(2t_1+u-1)$ is $\rho = (\sqrt{16t_1+8u-7}-1)/2$. If $|\Lambda|=|B\cup C|$, we have $\rho=r$, and we need to satisfy the inequality $\rho\le t_2-t_1/2+1/2 = u+t_1/2+1/2$ of Lemma \ref{Lm:Bound}(i). If $|\Lambda|<|B\cup C|$, we have $\rho < r\le \rho+1$, and the inequality $\rho+1\le t_2-t_1/2+3/2$ (which we note is the same as in the case $|\Lambda|=|B\cup C|$) is at least as strong as the inequality we need to satisfy in Lemma \ref{Lm:Bound}(ii). Now, the inequality $\rho\le u+t_1/2+1/2$ is equivalent to
\begin{displaymath}
    t_1^2+(4u-12)t_1+(4u^2+11)\ge 0,
\end{displaymath}
and it fails only in the cases ($u=1$ and $t_1=4$) and ($u=0$ and $2\le t_1\le 10$). Out of these, only the cases $t_1=t_2\in\{2,4,6,8\}$ actually fail the conditions of Lemma \ref{Lm:Bound}.
\end{proof}

\subsection{Main result}

We can now state and prove the main result. See the comments following Theorem \ref{Th:Main} for the exceptional cases $n_1=n_2\in\{4,8,12,16\}$.

\begin{theorem}\label{Th:Main}
Let $n_1\le n_2$ be positive even integers such that either $n_1<n_2$ or $n_1=n_2\not\in \{4$, $8$, $12$, $16\}$. Let $I(n_1,n_2) = \{1,\dots,n_1\}\times\{1,\dots,n_2\}$ be the $n_1\times n_2$ grid equipped with the $L_1$ metric $d((x_1,x_2),(y_1,y_2)) = |x_1-y_1|+|x_2-y_2|$. Let
\begin{displaymath}
    N = \setof{\{x,y\}\in I(n_1,n_2)\times I(n_1,n_2)}{d(x,y)=1}
\end{displaymath}
be the set of unit neighbors in $I(n_1,n_2)$. For a permutation $\pi$ of $I(n_1,n_2)$, let
\begin{displaymath}
    f(\pi) = \sum_{\{x,y\}\in N}d(\pi(x),\pi(y)).
\end{displaymath}
Then the set $\mathcal P(n_1,n_2)$ of Construction \ref{Co:Pi} is not empty, the function $f$ attains maximum at $\pi$ if and only if $\pi\in \mathcal P(n_1,n_2)$, and this maximum is equal to
\begin{equation}\label{Eq:Max}
    n_1n_2(n_1+n_2) - 4 - n_1 - w_{2n_1+2n_2-4}(n_1,n_2),
\end{equation}
where $w_k(n_1,n_2)$ is as in \eqref{Eq:WeightedBall}.
\end{theorem}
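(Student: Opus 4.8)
The plan is to assemble the machinery of the preceding sections, reducing the maximization of $f$ to the minimization of $g$ and then combining the sharp lower bound on $g$ with the nonemptiness result. First I would invoke Proposition \ref{Pr:ExactWithH}, which gives
\begin{displaymath}
    f(\pi) = \sum_{i=1}^2\left(\sum S_i^> - \sum S_i^<\right) - g(\pi),
\end{displaymath}
so that maximizing $f(\pi)$ is equivalent to minimizing $g(\pi)$. The leading term is a constant independent of $\pi$, and a direct computation evaluates it: since $S_1$ contains each value $i\in\{1,\dots,n_1\}$ with multiplicity $4n_2$, one finds $\sum S_1^>-\sum S_1^< = 4n_2 t_1^2 = n_1^2 n_2$, and symmetrically $\sum S_2^>-\sum S_2^< = n_1 n_2^2$, so the constant equals $n_1 n_2(n_1+n_2)$. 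This already fixes the value appearing in \eqref{Eq:Max} once the minimum of $g$ is identified.

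Next I would apply Corollary \ref{Cr:Key}, which furnishes the universal lower bound
\begin{displaymath}
    g(\pi) \ge 4 + n_1 + w_{2n_1+2n_2-4}(n_1,n_2),
\end{displaymath}
valid for every permutation. The essential point is the characterization of equality: the discussion following Corollary \ref{Cr:Key} shows that \eqref{Eq:Key} is an equality exactly when the four listed conditions hold simultaneously, namely $C=\Lambda_1$, a clean even split with exactly $n_1$ defects, defect values alternating between $t_i$ and $t_i+1$, and $B\cup C$ a disk. By the definition recorded after Corollary \ref{Cr:Key}, these are precisely the permutations produced by Construction \ref{Co:Pi}, that is, the members of $\mathcal P(n_1,n_2)$. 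Hence $g(\pi)$ meets the lower bound if and only if $\pi\in\mathcal P(n_1,n_2)$.

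The decisive ingredient is then Proposition \ref{Pr:SpecialCasesL1}, which guarantees $\mathcal P(n_1,n_2)\ne\emptyset$ under the stated hypotheses. Without this, the lower bound on $g$ would be merely an unattained infimum and the characterization of optimizers would fail; it is exactly here that the four exceptional square cases $n_1=n_2\in\{4,8,12,16\}$ must be excluded. Granting nonemptiness, the bound is attained, so the minimum of $g$ over all permutations equals $4+n_1+w_{2n_1+2n_2-4}(n_1,n_2)$ and is realized precisely on $\mathcal P(n_1,n_2)$. Translating back through the equivalence of the first paragraph, $f$ attains its maximum exactly on $\mathcal P(n_1,n_2)$, with value $n_1 n_2(n_1+n_2) - 4 - n_1 - w_{2n_1+2n_2-4}(n_1,n_2)$, as asserted in \eqref{Eq:Max}.

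I expect no serious new obstacle within this proof itself, since all the analytic and combinatorial difficulty has been front-loaded into the exact formula of Proposition \ref{Pr:ExactWithH}, the inequality chain of Proposition \ref{Pr:Key} and Lemma \ref{Lm:Ball}, and above all the realizability analysis of Proposition \ref{Pr:SpecialCasesL1}. The one point demanding care is that the four equality conditions be genuinely \emph{equivalent} to membership in $\mathcal P(n_1,n_2)$, not merely implied by it, so that the stated ``if and only if'' is truly bidirectional; this is secured by reading the conditions off the inequalities \eqref{Eq:IneqD}, \eqref{Eq:IneqC}, \eqref{Eq:Ineqd} together with the disk condition, each of which becomes an equality exactly under the corresponding bullet, so that any $\pi$ outside $\mathcal P(n_1,n_2)$ forces a strict inequality in \eqref{Eq:Key} and hence a strictly larger value of $g$.
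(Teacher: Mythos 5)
Your proposal is correct and follows essentially the same route as the paper's own proof: reduce to minimizing $g$ via Proposition \ref{Pr:ExactWithH}, characterize equality in Corollary \ref{Cr:Key} through the four bullet conditions defining $\mathcal P(n_1,n_2)$, invoke Proposition \ref{Pr:SpecialCasesL1} for nonemptiness, and compute the constant $\sum_{i=1}^2\left(\sum S_i^> - \sum S_i^<\right) = 4n_2t_1^2 + 4n_1t_2^2 = n_1n_2(n_1+n_2)$. No gaps; the argument matches the paper step for step.
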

\begin{proof}
Let $n_i=2t_i$. Recall that $f(\pi)$ is maximized if and only if $g(\pi)$ is minimized, and
\begin{displaymath}
    f(\pi) = \sum_{i=1}^2\left(\sum S_i^> - \sum S_i^<\right) - g(\pi)
\end{displaymath}
by Proposition \ref{Pr:ExactWithH}. By Corollary \ref{Cr:Key}, $g(\pi)\ge 4+n_1+w(B\cup C)$. The discussion preceding Construction \ref{Co:Pi} shows that equality is achieved in Corollary \ref{Cr:Key} if and only if $\pi\in \mathcal P(n_1,n_2)$, in which case $w(B\cup C) = w_{2n_1+2n_2-4}(n_1,n_2)$ because $B\cup C$ must be a disk in $I(n_1,n_2)$. By Proposition \ref{Pr:SpecialCasesL1}, $\mathcal P(n_1,n_2)\ne\emptyset$.

It remains to show that
\begin{displaymath}
    \sum_{i=1}^2\left(\sum S_i^> - \sum S_i^<\right) = n_1n_2(n_1+n_2).
\end{displaymath}
We have
\begin{displaymath}
    \sum S_1^> - \sum S_1^< = 4n_2((t_1+1)+(t_1+2)+\cdots+(t_1+t_1)-1-2-\cdots-t_1) = 4n_2t_1^2.
\end{displaymath}
Similarly, $\sum S_2^> - \sum S_2^< = 4n_1t_2^2$. Hence
\begin{displaymath}
    \sum_{i=1}^2\left(\sum S_i^> - \sum S_i^<\right) = 4n_2t_1^2 + 4n_1t_2^2 = n_1n_2(n_1+n_2),
\end{displaymath}
and we are through.
\end{proof}

It follows from our results that in the four exceptional cases $n_1=n_2\in\{4$, $8$, $12$, $16\}$, the maximum of $f$ must be less than
\eqref{Eq:Max}. We claim that the maximum is actually equal to
\begin{equation}\label{Eq:ReducedMax}
    n_1n_2(n_1+n_2) - 6 - n_1 - w_{2n_1+2n_2-4}(n_1,n_2),
\end{equation}
that is, two less than in the generic case. It is easy to construct permutations $\pi$ of $I(n_1,n_2)$ for which \eqref{Eq:ReducedMax} is attained, by modifying Construction \ref{Co:Pi}. It is more difficult to prove that the maximum must drop by two compared to the generic case, not just by one. The details will be presented elsewhere.

\section{Comments and open problems}

\subsection{Maximum separation of neighbors for permutations on the torus}

By mimicking the proof of Lemma \ref{Lm:Grid}, we obtain:

\begin{lemma}\label{Lm:TorusGrid}
Let $n_1\le n_2$ be even integers. Consider an $n_1\times n_2$ grid on a torus with $n_1n_2/2$ red and $n_1n_2/2$ blue vertices. Then there are at least $2n_1$ red-blue edges.

Suppose that there are precisely $2n_1$ red-blue edges. If $n_1<n_2$ then the grid can be split into monochromatic connected components by two vertical cuts. If $n_1=n_2$ then the grid can be split into monochromatic connected components by two vertical cuts or by two horizontal cuts.
\end{lemma}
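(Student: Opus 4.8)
The plan is to adapt the counting argument of Lemma~\ref{Lm:Grid} to the torus, where the key structural difference is that every row and every column is now a cycle rather than a path, so a non-monochromatic row (or column) contributes at least \emph{two} red-blue edges instead of one. First I would establish the lower bound $e \ge 2n_1$ on the number $e$ of red-blue edges. As before, I split into cases. If no row is monochromatic, then each of the $n_1$ rows, being a cycle with both colors present, has at least two color changes, so $e \ge 2n_1$. Symmetrically, if no column is monochromatic, then $e \ge 2n_2 \ge 2n_1$. In the remaining case there is a monochromatic row and a monochromatic column; by symmetry say both are red, forming a red ``cross'' on the torus. Here I would mimic the original estimate: letting $r$ and $c$ be the numbers of rows and columns containing a blue vertex, each such row or column must cross the monochromatic red line transversally, and on a torus each crossing forces \emph{two} color changes, giving $2r + 2c \le e$ (rather than $r+c\le e$). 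Combined with $rc \ge n_1 n_2/2$, I get $n_1^2 \le n_1 n_2 \le 2rc < (r+c)^2 \le (e/2)^2$, hence $e > 2n_1$, which is even stronger than needed.

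Next I would handle the equality case $e = 2n_1$. The inequalities above show that a monochromatic row together with a monochromatic column cannot coexist when $e = 2n_1$ (that case gave the strict inequality $e > 2n_1$), so either every row is non-monochromatic or every column is non-monochromatic. Suppose no row is monochromatic. Then each row contributes at least two color changes, and since the total is exactly $2n_1$, \emph{every} row has exactly two color changes; this means every column is monochromatic. A monochromatic column on the torus, together with the requirement that each row has exactly two color changes, forces the blue columns and red columns to each form a contiguous arc on the cyclic column-order — that is, the grid splits into a blue block of columns and a red block of columns separated by exactly two vertical cuts. The balance of colors forces each block to have $n_1 n_2/2$ vertices. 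If instead no column is monochromatic, the transposed argument yields a split by two horizontal cuts, but this requires $n_2 \le n_1$ for the edge count to be consistent, hence $n_1 = n_2$; when $n_1 < n_2$ only the vertical-cut conclusion survives.

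The main obstacle I anticipate is being careful about the cyclic topology in the equality analysis: on a torus a single monochromatic arc of columns gives a clean two-cut decomposition, but I must rule out the possibility of the blue columns forming two or more separate arcs, which would produce more than $2n_1$ color changes and hence contradict equality. Concretely, if the monochromatic columns did not group into a single blue arc and a single red arc, some row would register more than two color changes, violating the exact count $e = 2n_1$; I would spell out this implication to pin down that the cuts are exactly two and that the two resulting components are each connected and monochromatic. The case distinction $n_1 < n_2$ versus $n_1 = n_2$ then follows exactly as in Lemma~\ref{Lm:Grid}, using that a horizontal split would require at least $2n_2$ red-blue edges unless $n_1 = n_2$.
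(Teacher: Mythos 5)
Your proposal is correct and is precisely the adaptation the paper intends, since the paper offers no separate proof of Lemma \ref{Lm:TorusGrid} beyond the remark that it follows ``by mimicking the proof of Lemma \ref{Lm:Grid}.'' Your doubling of each count (non-monochromatic rows and columns are cycles with at least two color changes, giving $2r+2c\le e$ in the cross case and the exact-equality analysis forcing two contiguous arcs of monochromatic columns) is the faithful toroidal version of that argument, including the $n_1=n_2$ dichotomy.
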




\begin{theorem}
Let $n_1\le n_2$ be positive even integers, $n_i=2t_i$. Let $X$ be the $n_1\times n_2$ toroidal grid, and let $d$ be the $L_1$ metric on $X$. A permutation $\pi$ of $X$ maximizes $f$ of \eqref{Eq:f} if and only if it consists of an $S_1^<\times S_2^<$, $S_1^>\times S_2^>$ checkerboard adjacent to an $S_1^<\times S_2^>$, $S_1^>\times S_2^<$ checkerboard split by two vertical lines (or two horizontal lines if $n_1=n_2$) so that either along all defect lines the row defects alternate between $t_1$, $t_1+1$, or along all defect lines the column defects alternate between $t_2$, $t_2+1$. Such permutations exist.
\end{theorem}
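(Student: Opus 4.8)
The plan is to adapt the entire machinery built for the planar grid to the torus, exploiting the fact that the torus is actually simpler: there are no corners or boundary cells, so $B=C=\emptyset$ and every cell has exactly four neighbors. First I would re-derive the analogue of Proposition \ref{Pr:ExactWithH} in this setting. Since on the torus every value $\pi(x)_i$ is counted with its full multiplicity $8t_1$ (for $i=2$) or $8t_2$ (for $i=1$) in $\sum S_i^>$ or $\sum S_i^<$, the only corrections to the ideal sum come from defects. Thus the exact formula collapses to
\begin{displaymath}
    f(\pi) = \sum_{i=1}^2\left(\sum S_i^> - \sum S_i^<\right) - g(\pi),\qquad
    g(\pi) = \sum_{i=1}^2\left(2\sum D_i^> - 2\sum D_i^<\right),
\end{displaymath}
with no $B$ or $C$ terms. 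Maximizing $f$ is again equivalent to minimizing the (nonnegative) defect quantity $g(\pi)$.

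Next I would lower-bound $g(\pi)$. The inequality \eqref{Eq:IneqD} still applies coordinatewise, giving $\sum D_i^> - \sum D_i^< \ge (t_i+1/2)(|D_i^>|-|D_i^<|) + |D_i|/2$. On the torus the balance argument of Proposition \ref{Pr:Balance} forces $|D_i^>| = |D_i^<|$ (indeed the torus case is precisely the base case of that proof, where $C_i=B_i=\emptyset$ and $x_i=0$), so the first term vanishes and we get $g(\pi) \ge |D_1\cup D_2|$. Applying the new counting result Lemma \ref{Lm:TorusGrid} with the red/blue coloring by homogeneous versus heterogeneous values yields $|D_1\cup D_2|\ge 2n_1$, hence $g(\pi)\ge 2n_1$, and the maximum of $f$ is at most $n_1n_2(n_1+n_2) - 2n_1$.

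Then I would show this bound is attained exactly by the described permutations and no others. Equality in $g(\pi)\ge 2n_1$ requires simultaneously: (a) \eqref{Eq:IneqD} is an equality in both coordinates, forcing every value of $D_i^>$ to equal $t_i+1$ and every value of $D_i^<$ to equal $t_i$; and (b) $|D_1\cup D_2| = 2n_1$, the minimum from Lemma \ref{Lm:TorusGrid}. Condition (b), via the second part of that lemma, forces the torus to split by two vertical cuts (or two horizontal cuts when $n_1=n_2$) into a homogeneous half and a heterogeneous half, with all defects living along the two defect lines. Condition (a) then forces the defects along those lines to be uniformly row defects alternating $t_1,t_1+1$ or uniformly column defects alternating $t_2,t_2+1$, exactly as in the diagram preceding Construction \ref{Co:Pi}. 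This establishes that the maximizers are precisely the permutations described in the statement.

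Finally I would argue existence, which on the torus is unconditional (unlike the planar case with its four exceptions), because there are no boundary or corner constraints to satisfy: one simply places an $S_1^<\times S_2^<$, $S_1^>\times S_2^>$ double checkerboard on one half and the heterogeneous double checkerboard on the other, adjusting the entries straddling the two defect lines to realize the required alternation of $t_i$, $t_i+1$. I expect the main obstacle to be the coordinatewise balance claim $|D_i^>|=|D_i^<|$ on the torus; it is the crux of why the first term in \eqref{Eq:IneqD} drops out and hence why the clean bound $g(\pi)\ge 2n_1$ holds. Fortunately this is exactly the toroidal base case already established inside the proof of Proposition \ref{Pr:Balance}, so I would cite that computation directly rather than redo it. The remaining work—translating the combinatorial structure of the minimum cut from Lemma \ref{Lm:TorusGrid} into the precise checkerboard-plus-defect description—is routine once the coloring correspondence is set up.
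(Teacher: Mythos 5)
Your strategy is the same as the paper's: on the torus $B=C=\emptyset$, so $f$ is maximized iff $g(\pi)=2\sum_i\bigl(\sum D_i^>-\sum D_i^<\bigr)$ is minimized; one then bounds the number of defects below by $2n_1$ via Lemma \ref{Lm:TorusGrid} applied to the homogeneous/heterogeneous coloring, and analyzes the equality case. In fact, on the lower bound your write-up is more careful than the paper's one-line reduction: you correctly identify that without the toroidal balance $|D_i^>|=|D_i^<|$ (the base case inside the proof of Proposition \ref{Pr:Balance}) a surplus of small defects could in principle drive $g$ down, and you make the chain $g(\pi)\ge|D_1\cup D_2|\ge 2n_1$ explicit via \eqref{Eq:IneqD}. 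That part is sound and citable as you propose.

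The genuine gap is in the equality analysis, where you write that condition (a) forces the defects "along those lines" to be uniformly row defects alternating $t_1,t_1+1$ or uniformly column defects alternating $t_2,t_2+1$, "exactly as in the diagram preceding Construction \ref{Co:Pi}". Two things are conflated here. Uniformity of defect type along a \emph{single} cut is not a consequence of the extremal values in (a); it follows from the double-checkerboard structure forced by (b) (the relative phase of the two checkerboards abutting a cut determines, for every crossing edge, the same defect type). More importantly, on the torus there are \emph{two} defect lines, and the theorem asserts they carry the same type of defect; the planar diagram you invoke concerns a single defect line and says nothing about this. As stated, your conditions (a) and (b) would permit row defects with values $t_1,t_1+1$ along one cut and column defects with values $t_2,t_2+1$ along the other. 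Ruling this out is precisely the one nontrivial step the paper proves: a parity argument tracking the checkerboard phase across a half of width $t_2$ --- if the defect $(<,<)|(<,>)$ occurs across one cut in a given row, then across the other cut, in the same row, one sees $(<,<)|(<,>)$ when $t_2$ is odd and $(>,>)|(>,<)$ when $t_2$ is even, a row defect in either case. Without this (or an equivalent) argument you have shown that every maximizer has the two-cut checkerboard structure with extremal alternating values on each line, but not the "along all defect lines" clause of the statement, so the "only if" direction is incomplete. The fix is short, but it is a real missing step rather than routine bookkeeping.
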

\begin{proof}
Since there are no corners and boundaries on $X$, $f(\pi)$ is maximized if and only if $g(\pi) = 2(\sum D_1^> + \sum D_2^> - \sum D_1^< - \sum D_2^<)$ is minimized. As usual, color homogeneous entries blue and heterogeneous entries red. Every red-blue edge corresponds to a defect. By Lemma \ref{Lm:TorusGrid}, the number of red-blue edges is at least $2n_1$, and the bound $2n_1$ is attained only by two vertical dividing lines (or by two horizontal dividing lines when $n_1=n_2$, a possibility we will discard from now on).

Additional defects are avoided if and only if the red cells form an $S_1^<\times S_2^<$, $S_1^>\times S_2^>$ checkerboard and the blue cells form an $S_1^<\times S_2^>$, $S_1^>\times S_2^<$ checkerboard. This can certainly be arranged on $X$.

Consider now the cells along one of the vertical defect lines. Suppose there is a row defect along this line. Then, as in the non-toroidal case, all defects along this line will be row defects. Moreover, we claim that all defects along the other defect line will also be row defects. Suppose we encounter the defect $(<,<)|(<,>)$ across the first defect line, in a given row. If $t_2$ is odd, we will then see $(<,<)|(<,>)$ across the second defect line, while if $t_2$ is even, we will see $(>,>)|(>,<)$ there, in the same row.

The rest is easy.
\end{proof}

\begin{example}
A permutation $\pi$ on the toroidal $6\times 6$ grid that maximizes $f(\pi)$, with two vertical defect lines:
\begin{displaymath}
    \begin{array}{|c|c|c|c|c|c|}
        \hline
        \chh(6,4)&\cll(3,1)&\clh(2,4)&\chl(6,1)&\clh(3,4)&\cll(2,1)\\
        \hline
        \cll(1,1)&\chh(5,4)&\chl(4,1)&\clh(1,4)&\chl(5,1)&\chh(4,2)\\
        \hline
        \chh(6,5)&\cll(3,2)&\clh(2,5)&\chl(6,2)&\clh(3,5)&\cll(2,2)\\
        \hline
        \cll(1,2)&\chh(5,5)&\chl(4,2)&\clh(1,5)&\chl(5,2)&\chh(4,5)\\
        \hline
        \chh(6,6)&\cll(3,3)&\clh(2,6)&\chl(6,3)&\clh(3,6)&\cll(2,3)\\
        \hline
        \cll(1,3)&\chh(5,6)&\chl(4,3)&\clh(1,6)&\chl(5,3)&\chh(4,6)\\
        \hline
    \end{array}
\end{displaymath}
\end{example}

\subsection{Open problems}

In \cite{DV}, the neighbor-separation problem was solved for permutations on $1$-dimensional intervals with respect to the $L_1$ metric:

\begin{theorem}[{{\cite[Theorem 3.1]{DV}}}]\label{Th:DV}
Let $X=\{1,\dots,n\}$ and let $\d(i,j) = |i-j|$. Then the maximum value of $f$ defined by \eqref{Eq:f} is
\begin{displaymath}
    \begin{array}{ll}
        (2t^2-1)/(2t-1),&\text{if $n=2t$, and}\\
        (2t^2+2t-1)/(2t),&\text{if $n=2t+1$.}
    \end{array}
\end{displaymath}

When $n=2t$, the maximum is attained by precisely those permutations $\pi$ that oscillate between $\{1,\dots,t\}$, $\{t+1,\dots,n\}$ and satisfy $(\pi(1),\pi(n))\in\{(t,t+1)$, $(t+1,t)\}$.

When $n=2t+1$, the maximum is attained precisely by those permutations $\pi$ that oscillate between $\{1,\dots,t\}$, $\{t+1,\dots,n\}$ and satisfy $(\pi(1),\pi(n))\in\{(t+1,t+2)$, $(t+2,t+1)\}$, and by those that oscillate between $\{1,\dots,t+1\}$, $\{t+2,\dots,n\}$ and satisfy $(\pi(1),\pi(n))\in\{(t,t+1)$, $(t+1,t)\}$.
\end{theorem}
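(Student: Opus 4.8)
The plan is to convert $f$ into a signed sum over values and then bound it by shifting to the mean, an approach in the same spirit as the sign-assignment bound of Lemma~\ref{Lm:UpperBound}. First I would observe that the unit neighbors of $X=\{1,\dots,n\}$ are exactly the consecutive pairs, so that $f(\pi)=\sum_{i=1}^{n-1}|\pi(i+1)-\pi(i)|$ and $|N|=n-1$; thus the quantities in the statement are $\tfrac{1}{n-1}\max_\pi f(\pi)$, and it suffices to determine $\max_\pi f(\pi)$ together with its maximizers.

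Writing each term as $|\pi(i+1)-\pi(i)|=\sigma_i(\pi(i+1)-\pi(i))$ with $\sigma_i\in\{\pm1\}$ and collecting terms, every value $v$ acquires a coefficient $c_v$, so that $f(\pi)=\sum_{v=1}^n c_v v$. An interior value (one with $v=\pi(i)$, $2\le i\le n-1$) gets $c_v=\sigma_{i-1}-\sigma_i\in\{-2,0,2\}$, while the two endpoint values $a=\pi(1)$, $b=\pi(n)$ get $c_v\in\{\pm1\}$; moreover $\sum_v c_v=0$ by telescoping. Setting $\tilde v=v-\tfrac{n+1}{2}$ and using $\sum_v c_v=0$, I rewrite $f(\pi)=\sum_v c_v\tilde v$ and bound termwise, using $|c_v|\le 2$ for the $n-2$ interior values and $|c_a|,|c_b|\le 1$:
\begin{displaymath}
f(\pi)\le\sum_v|c_v|\,|\tilde v|\le 2\sum_v|\tilde v|-\bigl(|\tilde a|+|\tilde b|\bigr)\le 2\sum_v|\tilde v|-1,
\end{displaymath}
since any two distinct values satisfy $|\tilde a|+|\tilde b|\ge1$ (for even $n$ the minimum $\tfrac12+\tfrac12$ is attained only by $\{t,t+1\}$; for odd $n$ the minimum $0+1$ only by $\{t,t+1\}$ or $\{t+1,t+2\}$). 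A direct computation gives $2\sum_v|\tilde v|=2t^2$ when $n=2t$ and $2t^2+2t$ when $n=2t+1$, so $f(\pi)\le 2t^2-1$ and $f(\pi)\le 2t^2+2t-1$ respectively; dividing by $n-1$ yields the stated maxima.

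For attainment and the characterization I would run the equality analysis. Equality throughout forces $c_v=2\,\mathrm{sign}(\tilde v)$ at every interior value with $v\ne\tfrac{n+1}{2}$ --- that is, large values must be peaks and small values valleys --- and forces $\{a,b\}$ to be a pair with $|\tilde a|+|\tilde b|=1$ whose coefficients agree in sign with $\tilde a,\tilde b$. Since no two peaks and no two valleys can be adjacent, this pattern is realizable precisely by permutations that strictly alternate between a ``small'' block and a ``large'' block; reading off the admissible endpoint pairs then pins down the blocks and the endpoint values. When $n=2t$ the only admissible pair is $\{t,t+1\}$, giving oscillation between $\{1,\dots,t\}$ and $\{t+1,\dots,n\}$ with $(\pi(1),\pi(n))\in\{(t,t+1),(t+1,t)\}$. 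When $n=2t+1$ the two admissible pairs $\{t+1,t+2\}$ and $\{t,t+1\}$ produce the two families: split $\{1,\dots,t\}\mid\{t+1,\dots,n\}$ with endpoints $(t+1,t+2)$ up to order in the first case, and split $\{1,\dots,t+1\}\mid\{t+2,\dots,n\}$ with endpoints $(t,t+1)$ up to order in the second.

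I expect the main obstacle to be this equality step rather than the bound itself. I must check carefully that the forced coefficient pattern is genuinely equivalent to strict alternation between the two blocks, including that the endpoint values and the parity of the alternation are mutually consistent (for even $n$ the two ends lie in opposite classes, for odd $n$ in the same class). For odd $n$ I must further show that the two block splits are distinct and together exhaust the maximizers: the central value $v=t+1$ has $\tilde v=0$ and so may be assigned to either block without penalty, and this freedom is exactly what creates the dichotomy. Finally, verifying the converse --- that every oscillating permutation with the prescribed endpoints does attain equality --- is routine but requires confirming that the $\pm1$ endpoint coefficients emerge with the correct signs.
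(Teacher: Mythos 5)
Your proof is correct, but note that the paper does not actually prove Theorem~\ref{Th:DV}: it is quoted from \cite[Theorem 3.1]{DV}, so there is no in-paper proof to match against. What your argument amounts to is a clean, self-contained one-dimensional specialization of the machinery this paper develops for two dimensions: your coefficients $c_v\in\{0,\pm 1,\pm 2\}$ with $\sum_v c_v=0$ play the role of the multiset decomposition \eqref{Eq:L1PN} used in Lemma~\ref{Lm:UpperBound}; your endpoint penalty $|\tilde a|+|\tilde b|\ge 1$, minimized exactly by $\{t,t+1\}$ (resp.\ by $\{t,t+1\}$ or $\{t+1,t+2\}$ for odd $n$), is the one-dimensional analogue of the corner estimate \eqref{Eq:IneqC}, which in the paper forces $C=\Lambda_1$; and the forced peak/valley pattern is the analogue of the double checkerboard with defect values alternating between $t_i$ and $t_i+1$. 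Two details deserve to be made explicit in the equality analysis you flag as the main obstacle. First, for odd $n$ the condition $|\tilde a|+|\tilde b|=1$ with integer summands forces one endpoint to be the central value $t+1$ itself, so $t+1$ can never sit in the interior; the dichotomy between the two families is decided entirely by whether the other endpoint is $t$ or $t+2$, not by a free interior assignment of $t+1$ (your phrasing ``may be assigned to either block'' is harmless but slightly misleading on this point). Second, the step converting termwise equality into oscillation should be spelled out: equality forces every interior value $v$ with $\tilde v\ne 0$ to satisfy $|c_v|=2$ with $c_v=2\,\mathrm{sign}(\tilde v)$, peaks and valleys cannot be adjacent, and a peak cannot carry a small value, so the shape alternation and the value-class alternation coincide and propagate from the endpoints, whose signs $c_a=-\sigma_1$, $c_b=\sigma_{n-1}$ are consistent with both admissible orders of each endpoint pair. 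Your reading of the statement is also the right one: the displayed maxima are $\max_\pi f(\pi)$ divided by $|N|=n-1$, i.e., $2t^2-1$ and $2t^2+2t-1$ before normalization, matching your computation of $2\sum_v|\tilde v|$ as $2t^2$ for $n=2t$ and $2t^2+2t$ for $n=2t+1$.
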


We found the $2$-dimensional case in which at least one of the lengths is odd difficult to handle and we therefore ask:

\begin{problem}
Let $n_1$, $n_2$ be positive integers, not both even. Describe the permutations $\pi$ on $I(n_1,n_2)$ for which $f(\pi)$ is maximized with respect to the $L_1$ metric.
\end{problem}

The situation will be more intricate for $3$-dimensional intervals. When $n_1$, $n_2$, $n_3$ are even and approximately the same, $f(\pi)$ appears to be maximized by a triple checkerboard with defects occurring along two perpendicular planes through the center of the prism. However, when $n_1$ is much bigger than $n_2$ and $n_3$, $f(\pi)$ appears to be maximized by a triple checkerboard with defects occurring along two or more planes perpendicular to the first dimension.

\begin{problem}
Let $n_1$, $n_2$, $\dots$, $n_k$ be positive integers. Describe the permutations $\pi$ of the $n_1\times n_2\times\dots\times n_k$ grid that maximize $f(\pi)$ with respect to the $L_1$ metric.
\end{problem}

\section*{Acknowledgment}

We thank Dan Daly for pointing out several relevant references. We also thank an anonymous referees for very careful reading of the manuscript and for several suggestions that improved the presentation.

\end{document}